\title[A second proof of the Shareshian--Wachs conjecture]{A second proof of the Shareshian--Wachs conjecture, by way of a new Hopf algebra}
\date{\today}
\author{Mathieu Guay-Paquet}
\email{mathieu.guaypaquet@lacim.ca}
\address{%
LaCIM\\
Universit\'e du Qu\'ebec \`a Montr\'eal\\
201 Pr\'esident-Kennedy\\
Montr\'eal QC\ \ H2X~3Y7\\
Canada}
\declaretheorem[numberlike=equation,style=plain]{theorem}
\declaretheorem[numberlike=equation,style=plain]{lemma}
\declaretheorem[numberlike=equation,style=plain]{corollary}
\declaretheorem[numberlike=equation,style=plain]{proposition}
\declaretheorem[numberlike=equation,style=plain]{conjecture}
\declaretheorem[numberlike=equation,style=definition]{remark}
\declaretheorem[numberlike=equation,style=definition]{question}
\declaretheorem[numberlike=equation,style=definition]{algorithm}
\newcommand{\arxiv}[1]{\href{http://arxiv.org/abs/#1}{arXiv:#1}}
\DeclarePairedDelimiter{\abs}{|}{|}
\DeclarePairedDelimiter{\set}{\{}{\}}
\DeclarePairedDelimiter{\paren}{(}{)}
\DeclarePairedDelimiter{\gen}{\langle}{\rangle}
\DeclareMathOperator{\csf}{CSF}
\DeclareMathOperator{\qsym}{\mathcal{Q}Sym}
\DeclareMathOperator{\sym}{Sym}
\DeclareMathOperator{\og}{\mathcal{G}}
\DeclareMathOperator{\dyck}{\mathcal{D}}
\DeclareMathOperator{\flag}{Flag}
\DeclareMathOperator{\hess}{Hess}
\DeclareMathOperator{\tr}{Trace}
\DeclareMathOperator{\frob}{Frob}
\DeclareMathOperator{\stat}{stat}
\newcommand{\CC}{\mathbb{C}}
\newcommand{\PP}{\mathbb{P}}
\newcommand{\xx}{\mathbf{x}}
\newcommand{\HH}{\mathcal{H}}
\newcommand{\LL}{\mathbf{L}}
\newcommand{\RR}{\mathbf{R}}
\newcommand{\TT}{\mathcal{T}}
\newcommand{\OO}{\mathcal{O}}
\newcommand{\id}{\mathrm{id}}
\newcommand{\olex}{\mathbin{\vec{\oplus}}}
\newcommand{\bigolex}{\mathop{\overrightarrow{\bigoplus}}}
\newcommand{\zq}{\zeta_{\mathcal{Q}}}
\newcommand{\sll}[1][]{S(\LL_{#1})}
\newcommand{\srr}[1][]{S(\RR_{#1})}
\newcommand{\slr}{S(\LL \leftarrow \RR)}
\newcommand{\one}{\mathbf{1}}
\renewcommand{\emph}{\textbf}
\begin{document}

\begin{abstract}
This is a set of working notes which give a second proof of the Shareshian--Wachs conjecture, the first (and recent) proof being by Brosnan and Chow in November 2015.
The conjecture relates some symmetric functions constructed combinatorially out of unit interval graphs (their $q$-chromatic quasisymmetric functions), and some symmetric functions constructed algebro-geometrically out of Tymoczko's representation of the symmetric group on the equivariant cohomology ring of a family of subvarieties of the complex flag variety, called regular semisimple Hessenberg varieties.
Brosnan and Chow's proof is based in part on the idea of deforming the Hessenberg varieties.
The proof given here, in contrast, is based on the idea of recursively decomposing Hessenberg varieties, using a new Hopf algebra as the organizing principle for this recursion.
We hope that taken together, each approach will shed some light on the other, since there are still many outstanding questions regarding the objects under study.
\end{abstract}

\maketitle

\setcounter{tocdepth}{1}
\tableofcontents

\section{Introduction}\label{sec:intro}

The goal of this document is to give all of the necessary ingredients to prove the Shareshian--Wachs conjecture (see \autoref{sec:hess} for the notation):
\begin{conjecture}[\cite{shareshian-wachs12}*{Conjecture 5.3}]
  For all Hessenberg functions $h$,
  \begin{equation}
    \omega\paren[\big]{\frob_q\paren[\big]{\sll, H^*_T(h), \CC[\LL]}} = \csf_q\paren[\big]{G(h)}.
  \end{equation}
\end{conjecture}
As noted above, we are not the first ones to do this; Brosnan and Chow \cite{brosnan-chow15} have recently given a proof.
As part of their proof, they uncover and use much structure on the geometric side of things, not just for the case of regular semisimple Hessenberg varieties, but also relating it to the case of regular (not necessarily semisimple) Hessenberg varieties.
For the proof here, we instead focus on recursions in the combinatorial defining data for regular semisimple Hessenberg varieties, which seems like a complementary approach.

Quasisymmetric functions are ubiquitous in algebraic combinatorics for the following reason: nice combinatorial objects can usually be combined and broken apart in a way that leads to a graded-connected Hopf algebra structure, and there is a universal recipe for constructing quasisymmetric functions out of this data.
One of the observations we make in this paper is that Shareshian and Wachs's $\csf_q$ can be constructed using this recipe, so that it is uniquely determined by a very small amount of data.
This data consists of a single coefficient in $\CC(q)$ for each ordered graph, rather than a whole quasisymmetric function over $\CC(q)$ for each ordered graph.
Furthermore, it turns out that each of these coefficients is either zero or a power of $q$.
This can be thought of as both a quantitative and a qualitative reduction in the work needed to show that the $\csf_q$ is what pops out of the construction involving Hessenberg varieties.

The Hopf algebra structure suggests that Hessenberg varieties are recursively structured in a very principled way.
Guided by this, we show that the Hessenberg construction respects the multiplicative and comultiplicative structures present in the Hopf algebra on Dyck paths by giving explicit decompositions of the equivariant cohomology rings.
The base case involves identifying the subspace of these equivariant cohomology rings on which the symmetric group acts according to the sign representation, which we also do explicitly.

\subsection{Acknowledgments}
The starting point for this paper, the idea that there is a Hopf algebra of Dyck paths which is applicable to $q$-chromatic quasisymmetric functions, really comes from a suggestive formula in an interesting short note by Athanasiadis \cite{athanasiadis15} (see \autoref{rem:athanasiadis}).
I would also like to thank Amy Pang, Franco Saliola, Hugh Thomas and Nathan Williams for encouragement and very helpful discussions on the topic, and LaCIM for providing funding and a wonderful work environment.

\section{The \texorpdfstring{$q$}{q}-chromatic quasisymmetric function}\label{sec:qcsf}
Given a (finite simple undirected) graph $G = (V, E)$ and a set of colours $C$, a \emph{colouring} is just a function $\kappa \colon V \to C$ assigning a colour to each vertex of $G$.
We are typically interested in \emph{proper} colourings, that is, ones which assign different colours to neighbouring vertices.

The \emph{chromatic polynomial} of $G$ is the function
\begin{equation}
  P(G, r) = \sum_{\substack{\kappa \colon V \to [r] \\ \text{proper}}} 1,
\end{equation}
which counts the number of proper colourings of $G$ with $r$ colours.
Here, we take the set of $r$ colours to be $[r] = \set{1, 2, \ldots, r}$;
since a colouring of $G$ stays proper if the colours are relabelled, this is no loss of generality.

We can get a more refined count of the proper colourings of $G$ by considering the number of times each colour is used.
Stanley's \emph{chromatic symmetric function} \cite{stanley95}, which keeps track of this information, is the formal power series
\begin{equation}
  \csf(G, \xx) = \sum_{\substack{\kappa \colon V \to \PP \\ \text{proper}}} \xx_\kappa.
\end{equation}
Here, the set of colours is the countable set $\PP = \set{1, 2, \ldots}$, with a corresponding set of indeterminates $\xx = (x_1, x_2, \ldots)$, and the monomial $\xx_\kappa$ is the product of $x_{\kappa(v)}$ over all vertices $v \in V$, so that the exponent of $x_i$ in $\xx_\kappa$ counts the number of times the colour $i$ is used in the colouring $\kappa$.
Relabelling the colours in each colouring amounts to permuting the indeterminates, so that $\csf(G, \xx)$ is indeed a symmetric function, invariant under permutations of $\xx$.
The chromatic polynomial can be recovered from the chromatic symmetric function as
\begin{equation}
  P(G, r) = \csf\paren[\big]{G, (\underbrace{1, \ldots, 1}_{\text{$r$ copies}}, 0, 0, 0, \ldots)}.
\end{equation}

If the set of vertices and the set of colours each come equipped with a total order, then we can further refine the count of proper colourings by counting \emph{ascents} in each colouring $\kappa$: pairs of vertices $u, v$ such that
\begin{itemize}[nosep,label=--]
  \item $u$ and $v$ are neighbours,
  \item $u < v$ in the ordering on vertices, and
  \item $\kappa(u) < \kappa(v)$ in the ordering on colours.
\end{itemize}
Shareshian and Wachs's \emph{$q$-chromatic quasisymmetric function} \cite{shareshian-wachs12}, which takes this into account, is the formal power series
\begin{equation}\label{eq:def-qcsf}
  \csf_q(G, \xx) = \sum_{\substack{\kappa \colon V \to \PP \\ \text{proper}}} q^{(\text{\# ascents of $\kappa$ on $G$})} \, \xx_\kappa,
\end{equation}
where now $G = (V, E, {<})$ is an \emph{ordered} graph, and we take the usual ordering on $\PP$.
The number of ascents of a colouring is only invariant under relabellings of the colours which preserve the order of the colours used, so $\csf_q(G, \xx)$ is only a \emph{quasi}symmetric function in general.
However, the graphs we consider in this paper (which come from natural unit interval orders) have the property that $\csf_q(G, \xx)$ is in fact symmetric.
In any case, setting $q=1$ recovers the usual chromatic symmetric function.

\section{The Hopf algebra of ordered graphs, with a twist}\label{sec:graph}
Our goal in this section is to define a Hopf algebra for ordered graphs.
Some of the operations could be naturally defined over a smaller ring, but for simplicity we take the field $\CC(q)$ of rational functions in $q$ as ground ring. Our ground set $\og$ is the set of all $\CC(q)$-linear combinations of ordered graphs (up to isomorphism).
We will define the $r$-fold multiplication map
\begin{equation}
  \nabla_r \colon \underbrace{\og \otimes \cdots \otimes \og}_{\text{$r$ copies}} \to \og
\end{equation}
on the basis of $r$-fold tensor products of ordered graphs by
\begin{equation}\label{eq:def-mult}
  \nabla_r(G_1 \otimes \cdots \otimes G_r) = G_1 \olex \cdots \olex G_r,
\end{equation}
where $\olex$ is essentially the concatenation operation on ordered graphs (see below).
The $r$-fold comultiplication map
\begin{equation}
  \Delta_r \colon \og \to \underbrace{\og \otimes \cdots \otimes \og}_{\text{$r$ copies}}
\end{equation}
will be given on the basis of ordered graphs as a sum over all colourings of the ordered graph under consideration, where in addition we keep track of a statistic, the number of ascents of each colouring (see below again):
\begin{equation}\label{eq:def-comult}
  \Delta_r(G) = \sum_{\substack{\kappa \colon V \to [r] \\ \text{arbitrary}}} q^{(\text{\# ascents of $\kappa$ on $G$})} \, G|_\kappa.
\end{equation}
This is analogous to a sum over all deshufflings of $G$.
If it weren't for the power of $q$ keeping track of the number of ascents, this would fit the framework of Hopf monoids of Aguiar and Mahajan \cite{aguiar-mahajan12}*{Section 9.4}, and would in fact be a well-known Hopf algebra related to Stanley's $\csf$ (see \cite{aguiar-bergeron-sottile06}*{Example 4.5}).

To make $\og$ graded-connected, which is sufficient to transform it from a bialgebra to a Hopf algebra, we take the degree of an ordered graph to be its number of vertices.
The subsections below give the details of these constructions, and the facts needed to show that $\og$ is indeed a graded-connected Hopf algebra.
Feel free to skip them.
For ease of citation, the summary is that:
\begin{proposition}
  With the definitions of this section, the space $\og$ is a graded-connected Hopf algebra over $\CC(q)$.
\end{proposition}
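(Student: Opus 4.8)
The plan is to check, in this order, the algebra axioms for $(\og, \nabla_2, \one)$, the coalgebra axioms for $(\og, \Delta_2, \epsilon)$ with counit $\epsilon \defeq \Delta_0$, the bialgebra compatibility between the two, and compatibility of everything with the grading by number of vertices; then a graded-connected bialgebra over a field is automatically a Hopf algebra. The guiding observation is that, after forgetting the powers of $q$, all of this structure is an instance of the Hopf-monoid framework of Aguiar and Mahajan \cite{aguiar-mahajan12}*{Section~9.4}, so that each axiom may be verified just as in that (or the $q=1$) setting; the only genuinely new point is to track the exponents of $q$ and confirm that ascent counts bookkeep consistently.

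Since $\nabla_2$ involves no $q$ and $\olex$ is concatenation of ordered graphs, associativity (equivalently $\nabla_2 \circ (\nabla_2 \otimes \id) = \nabla_3 = \nabla_2 \circ (\id \otimes \nabla_2)$) and the unit law $G \olex \one = G = \one \olex G$ are immediate, as is degree-additivity. For the coalgebra side, first note $\Delta_1 = \id$: the only colouring $\kappa \colon V \to [1]$ is constant, has no ascents, and gives $G|_\kappa = G$. Hence $\Delta_0 = \epsilon$ extracts the coefficient of the empty graph, and $(\epsilon \otimes \id) \circ \Delta_2 = \id = (\id \otimes \epsilon) \circ \Delta_2$ because the only colouring contributing on either side is the constant one (again with no ascents). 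For coassociativity I would prove the stronger statement that $(\Delta_a \otimes \Delta_b) \circ \Delta_2 = \Delta_{a+b}$ for all $a, b \ge 0$ (identifying $[a+b]$ with $[a]$ followed by $[b]$), which for $(a,b) \in \set{(2,1),(1,2)}$ yields $(\Delta_2 \otimes \id)\circ\Delta_2 = \Delta_3 = (\id \otimes \Delta_2)\circ\Delta_2$. This identity rests on the bijection between colourings $\kappa \colon V \to [a+b]$ and triples consisting of a coarse colouring $V \to [2]$ together with refinements of its two classes by $[a]$ and $[b]$: under this bijection $G|_\kappa$ is assembled compatibly, and---this is the crucial $q$-bookkeeping---the ascents of $\kappa$ partition into those internal to the first coarse class, those internal to the second, and those running between the two classes, the last being exactly the ascents recorded by the coarse colouring.

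Next comes the bialgebra compatibility $\Delta_2 \circ \nabla_2 = (\nabla_2 \otimes \nabla_2)\circ(\id \otimes \tau \otimes \id)\circ(\Delta_2 \otimes \Delta_2)$ (with $\tau$ the flip), i.e.\ $\Delta_2(G \olex G') = \Delta_2(G)\cdot\Delta_2(G')$. A colouring of $G \olex G'$ is precisely a pair of colourings of $G$ and of $G'$; the colour-$i$ induced subgraph of $G \olex G'$ is the concatenation of the colour-$i$ induced subgraphs of $G$ and of $G'$, so the tensor factors of $G|_\kappa$ multiply correctly, and since $\olex$ creates no edge joining a vertex of $G$ to a vertex of $G'$, the number of ascents of $\kappa$ on $G\olex G'$ is the sum of those of its two restrictions; matching terms along the above bijection then gives the identity. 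The residual compatibilities $\epsilon\circ\nabla_2 = \epsilon\otimes\epsilon$ and $\Delta_2(\one) = \one\otimes\one$ involve only the empty graph and hold trivially.

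Finally, the grading by number of vertices is respected by all the structure maps (checked above), and $\og_0 = \CC(q)\,\one$ because the empty graph is the unique ordered graph with no vertices, so $\og$ is graded-connected. A graded-connected bialgebra over a field carries a (unique) antipode---for instance by the inductive formula $S(\one) = \one$, $S(x) = -x - \sum S(x')\,x''$ over the reduced coproduct $\bar\Delta(x) = \sum x' \otimes x''$, which terminates because $\bar\Delta$ strictly lowers degree in each tensor factor---so $\og$ is a graded-connected Hopf algebra over $\CC(q)$. I expect the only non-routine ingredient to be the ascent-count bookkeeping in the coassociativity identity (and, to a lesser extent, in the bialgebra axiom); granting those combinatorial facts, every remaining axiom is formal.
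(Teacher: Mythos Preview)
Your proposal is correct and follows essentially the same route as the paper: both verify associativity (trivial, since $\olex$ involves no $q$), coassociativity via the identity $(\Delta_{r_1}\otimes\cdots\otimes\Delta_{r_s})\circ\Delta_s=\Delta_r$ with the key step being the partition of ascents of a fine colouring into those internal to each coarse colour class plus the ascents of the coarse colouring itself, and the bialgebra axiom using that $\olex$ creates no edges between summands so ascent counts simply add. The only stylistic difference is that the paper works throughout with the $r$-fold maps $\nabla_r,\Delta_r$ and general bracketings, whereas you reduce to the binary case $(a,b)$; these are equivalent, and you have correctly isolated the ascent bookkeeping as the only substantive point.
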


\subsection{Isomorphism}
Throughout this paper, we only care about ordered graphs up to isomorphism. Two ordered graphs $G = (V, E, {<})$ and $G' = (V', E', {<'})$ are \emph{isomorphic} is there is a bijection $\varphi \colon V \to V'$ such that:
\begin{itemize}[nosep,label=--]
  \item $\set{u, v}$ is an edge of $G$ iff $\set{\varphi(u), \varphi(v)}$ is an edge of $G'$, and
  \item $u < v$ in the order on $G$ iff $\varphi(u) <' \varphi(v)$ in the order on $G'$.
\end{itemize}
The isomorphism class of $G = (V, E, {<})$ contains a unique graph with the ordered vertex set $\set{1 < 2 < \cdots < n}$, where $n = \abs{V}$, which can be thought of as a canonical representative for $G$. However, for the constructions below, we keep the flexibility of using other vertex sets and other orderings.

\subsection{Colourings, ascents, descents}
Let $G = (V, E, {<})$ be an ordered graph and let $\kappa \colon V \to [r]$ be a colouring, that is, an arbitrary function.
Let $e = \set{u, v} \in E$ be an edge, with $u < v$ according to the ordering.
Then, the edge $e$ can relate to the colouring $\kappa$ in three different ways:
\begin{itemize}[nosep,label=--]
  \item if $\kappa(u) = \kappa(v)$, then $e$ is \emph{monochromatic};
  \item if $\kappa(u) < \kappa(v)$, then $e$ is an \emph{ascent};
  \item if $\kappa(u) > \kappa(v)$, then $e$ is a \emph{descent}.
\end{itemize}
Note that $\kappa$ is a proper colouring iff there are no monochromatic edges.

\subsection{Restriction}
Let $U \subseteq V$ be a subset of the vertices of an ordered graph $G = (V, E, {<})$.
Then, we define the \emph{restriction} $G|_U = (U, E|_U, {<}|_U)$ of $G$ to $U$ in the obvious way:
$E|_U$ is the set of edges with both endpoints in $U$, and ${<}|_U$ is the restriction of the order relation on $V$ to $U$.
Given a colouring $\kappa \colon V \to [r]$, we also define the \emph{restriction} $G|_\kappa$ to be the list of ordered graphs
\begin{equation}
  (G|_{\kappa^{-1}(1)},\; G|_{\kappa^{-1}(2)},\; \ldots,\; G|_{\kappa^{-1}(r)}),
\end{equation}
that is, the restriction of $G$ to each of the colour classes, in order.
Note that the edges which survive in the restriction are exactly the monochromatic edges; the ascents and descents disappear.

\subsection{Lexicographic union}
Conversely, given a list of ordered graphs
\begin{equation}
  (G_1,\; G_2,\; \ldots,\; G_r),
\end{equation}
we can construct a particular ordered graph $G = (V, E, {<})$ that restricts to it, which we call the \emph{lexicographic union} $\mathop{\olex}_i G_i$ of this list. If the listed graphs are $G_i = (V_i, E_i, {<_i})$, then:
\begin{itemize}[nosep,label=--]
  \item $V$ is the disjoint union of $V_1, \ldots, V_r$;
  \item $E$ is the disjoint union of $E_1, \ldots, E_r$; and
  \item $u < v$ iff either $u <_i v$ in some $V_i$, or $u \in V_i$ and $v \in V_j$ with $i < j$.
\end{itemize}
If we colour the vertices of $V_i \subseteq V$ with colour $i$, then every edge is monochromatic, and the restriction of $G$ to this colouring is $(G_1, G_2, \ldots, G_r)$.
We will also use the notation $\mathop{\olex}_i V_i$ to refer to the lexicographic union $(V, <)$ of ordered sets $(V_i, {<_i})$, where we ignore edge sets completely.

\subsection{Reshuffling}\label{sec:reshuffling}
Now, consider a list of $r$ ordered graphs
\begin{equation}
  (G_1,\; G_2,\; \ldots,\; G_r)
\end{equation}
as above, with $G_i = (V_i, E_i, {<_i})$, so that we can talk of the lexicographic union $G_1 \olex \cdots \olex G_r$, and let $\kappa \colon V_1 \olex \cdots \olex V_r \to [s]$ be an arbitrary colouring. In this situation, we can construct the restriction
\begin{equation}
    \bigolex_{i=1,\ldots,r} G_i \bigg|_\kappa = (G'_1,\; G'_2,\; \ldots,\; G'_s).
\end{equation}
The compatibility axiom for the multiplication and the comultiplication of the Hopf algebra $\og$ essentially says that we get this second list of $s$ ordered graphs $G'_i = (V'_i, E'_i, {<'_i})$ whether we first construct the lexicographic union and then the restriction, or the restriction first and the lexicographic union second.
(The compatibility axiom also says that the total number of ascents should be the same in both cases.)
To show that this is the case, let us be a bit more explicit about what we mean by doing ``the restriction first and the lexicographic union second''.

Restricting the domain of the colouring $\kappa$ on $V_1 \olex \cdots \olex V_r$ gives a colouring $\kappa_i = \kappa|_{V_i} \colon V_i \to [s]$ for each $i = 1, \ldots, r$, so we can construct $r$ lists of $s$ ordered graphs each,
\begin{equation}\label{eq:rtable}\begin{aligned}
  G_1|_{\kappa_1} &= (G''_{1,1},\; G''_{1,2},\; \ldots,\; G''_{1,s}), \\
  G_2|_{\kappa_2} &= (G''_{2,1},\; G''_{2,2},\; \ldots,\; G''_{2,s}), \\
  &\vdotswithin{=} \\
  G_r|_{\kappa_r} &= (G''_{r,1},\; G''_{r,2},\; \ldots,\; G''_{r,s}),
\end{aligned}\end{equation}
where $G''_{i,j}$ is the restriction of $G_i$ to the vertices of $V_i$ which are coloured $j$ by $\kappa$.
There is only one sensible way to reassemble these ordered graphs into a list of $s$ ordered graphs, and one can check that
\begin{equation}\label{eq:utable}\begin{aligned}
  G''_{1,1} \olex G''_{2,1} \olex \cdots \olex G''_{r,1} &= G'_1, \\
  G''_{1,2} \olex G''_{2,2} \olex \cdots \olex G''_{r,2} &= G'_2, \\
  &\vdotswithin{=} \\
  G''_{1,s} \olex G''_{2,s} \olex \cdots \olex G''_{r,s} &= G'_s
\end{aligned}\end{equation}
by unrolling the definitions of restriction to a colouring and lexicographic union.
The only subtlety is that the table of intermediate ordered graphs $G''_{i,j}$ is transposed between \eqref{eq:rtable} and \eqref{eq:utable}.

As for ascents, note that $G_1 \olex \cdots \olex G_r$ does not have any edges between $V_i$ and $V_j$ for $i \neq j$. Thus, we have
\begin{multline}
  (\text{\# ascents of $\kappa$ on $G_1 \olex \cdots \olex G_r$}) \\
   = (\text{\# ascents of $\kappa_1$ on $G_1$})
   + \cdots
   + (\text{\# ascents of $\kappa_r$ on $G_r$}).
\end{multline}

\subsection{Associativity}
Let us show that the multiplication map from \eqref{eq:def-mult} turns $\og$ into an associative unital $\CC(q)$-algebra.
We have a basis of
\begin{equation}
  \underbrace{\og \otimes \cdots \otimes \og}_{\text{$r$ copies}}
\end{equation}
which consists of all lists of $r$ ordered graphs, of the form
\begin{equation}
  G_1 \otimes \cdots \otimes G_r.
\end{equation}
On this basis element, the $r$-fold multiplication map is defined to be
\begin{equation}
  \nabla_r(G_1 \otimes \cdots \otimes G_r) = G_1 \olex \cdots \olex G_r \in \og.
\end{equation}
Now, consider a bracketing of the list of $r$ ordered graphs, of the form
\begin{equation}\label{eq:bracket}\begin{aligned}
  &\phantom{{}\otimes{}} (G_1 \otimes \cdots \otimes G_{r_1}) \\
  &\otimes (G_{r_1+1} \otimes \cdots \otimes G_{r_1+r_2})\\
  &\vdotswithin{\otimes} \\
  &\otimes (G_{r_1+\cdots+r_{s-1}+1} \otimes \cdots \otimes G_{r_1+\cdots+r_{s-1}+r_s}),
\end{aligned}\end{equation}
where there are $s$ brackets, the $i$th bracket contains $r_i$ ordered graphs, and $r_1 + \cdots + r_s = r$.
To have associativity, we need the trivial condition that $\nabla_1 \colon \og \to \og$ be the identity map, and the condition that the multiplication done according to the bracketing above,
\begin{equation}
  \nabla_s \circ (\nabla_{r_1} \otimes \nabla_{r_2} \otimes \cdots \otimes \nabla_{r_s}),
\end{equation}
be equal to the multiplication $\nabla_r$.
Given the definition of lexicographic union, this boils down to verifying that the bijection of ordered sets
\begin{equation}
  \ell \colon [r_1] \olex [r_2] \olex \cdots \olex [r_s] \to [r]
\end{equation}
given on each $[r_j] \subseteq [r_1] \olex [r_2] \olex \cdots \olex [r_s]$ by
\begin{equation}\label{eq:ell}\begin{aligned}
  \ell|_{[r_j]} \colon [r_j] &\to [r] \\
  i \;\; &\mapsto \; r_1 + \cdots + r_{j-1} + i
\end{aligned}\end{equation}
respects the order, which is immediate.
It follows that the empty ordered graph $G_0 = (\varnothing, \varnothing, {<})$, which is the image of the empty list of ordered graphs under the 0-fold multiplication map $\nabla_0 \colon \CC(q) \to \og$, is a unit element for $\og$.

\subsection{Coassociativity}
Dually, let us show that $\og$ is a coassociative counital coalgebra under the comultiplication map from \eqref{eq:def-comult}.
Recall this is the map
\begin{equation}
  \Delta_r \colon \og \to \underbrace{\og \otimes \cdots \otimes \og}_{\text{$r$ copies}}
\end{equation}
defined on the basis of $\og$ which consists of all ordered graphs by
\begin{equation}\label{eq:redef-comult}
  \Delta_r(G) = \sum_{\substack{\kappa \colon V \to [r] \\ \text{arbitrary}}} q^{(\text{\# ascents of $\kappa$ on $G$})} \,
  G|_{V_1} \otimes \cdots \otimes G|_{V_r},
\end{equation}
where $V_i = \kappa^{-1}(i)$ is the set of vertices of $G$ assigned to colour $i$ by $\kappa$ for each $i = 1, \ldots, r$.
As with associativity, there is the trivial condition that $\Delta_1 \colon \og \to \og$ be the identity map, and the condition that
\begin{equation}\label{eq:coassoc}
  (\Delta_{r_1} \otimes \Delta_{r_2} \otimes \cdots \otimes \Delta_{r_s}) \circ \Delta_s = \Delta_r
\end{equation}
for a bracketing as in \eqref{eq:bracket}, which we can establish by showing that each term in the sum on the left-hand side of \eqref{eq:coassoc} corresponds to a term in the sum on the right-hand side and vice versa (after substituting the defining sum \eqref{eq:redef-comult} and distributing the sums over the tensor product).

Fix a bracketing, and hence an order-preserving bijection
\begin{equation}
  \ell \colon [r_1] \olex [r_2] \olex \cdots \olex [r_s] \to [r]
\end{equation}
as defined in \eqref{eq:ell}. For each colouring $\kappa \colon V \to [r]$ on $G$, we can define a coarser colouring $\kappa' \colon V \to [s]$ by the composition
\begin{equation}
V \overset{\kappa}{\longrightarrow} [r] \overset{\ell^{-1}}{\longrightarrow} [r_1] \olex \cdots \olex [r_s] \longrightarrow [s],
\end{equation}
where the last arrow is the map sending every colour $i$ in the $j$th summand $[r_j]$ to $j$. This colouring is coarser in the following sense: if two vertices $u, v \in V$ are assigned the same colour in $[r]$ by $\kappa$, then they are assigned the same colour in $[s]$ by $\kappa'$, so that the colour classes for $\kappa'$ are obtained by clumping together colour classes for $\kappa$. Let $V'_j = \kappa'^{-1}(j)$ be the $j$th colour class of $\kappa'$. To recover $\kappa$ from $\kappa'$, the extra information needed is exactly given by the colourings
\begin{equation}
  \kappa'_j = (\ell^{-1} \circ \kappa)\big|_{V'_j} \colon V'_j \to [r_j], \qquad j = 1, \ldots, s.
\end{equation}
This establishes a correspondence between the terms on the right-hand side of \eqref{eq:coassoc}, each of which is given by:
\begin{itemize}[nosep,label=--]
  \item an arbitrary colouring $\kappa \colon V \to [r]$,
\end{itemize}
and the terms on the left-hand side, each of which is given by:
\begin{itemize}[nosep,label=--]
  \item an arbitrary colouring $\kappa' \colon V \to [s]$, and
  \item an arbitrary colouring $\kappa'_j \colon V'_j \to [r_j]$ for each $j = 1, \ldots, s$.
\end{itemize}
It remains to show that corresponding terms are, in fact, equal.
For the right-hand term, the $i$th ordered graph in the tensor product is the restriction of $G$ to the colour class $V_i = \kappa^{-1}(i)$ of $\kappa$.
For the left-hand term, the $i$th factor appears in the $j$th bracket, where $V'_j$ is the colour class of $\kappa'$ containing $V_i$, and it is the restriction of $G$ first to $V'_j$, then to $V_i$; this is the same ordered graph.
For the right-hand term, the power of $q$ appearing in the coefficient is
\begin{equation}\label{eq:rhs-ascents}
  (\text{\# ascents of $\kappa$ on $G$}).
\end{equation}
For the left-hand term, the power of $q$ in the coefficient is
\begin{multline}\label{eq:lhs-ascents}
  (\text{\# ascents of $\kappa'$ on $G$}) \\
  {} + (\text{\# ascents of $\kappa'_1$ on $G|_{V'_1}$})
  + \cdots + (\text{\# ascents of $\kappa'_s$ on $G|_{V'_s}$}).
\end{multline}
To see that these two numbers are the same, consider an edge $\set{u, v} \in E$ with $u < v$. This edge is monochromatic for $\kappa$, that is, $\kappa(u) = \kappa(v)$ when:
\begin{itemize}[nosep,label=--]
  \item $u, v \in V'_j$ for some $j$, so that $\kappa'(u) = \kappa'(v) = j$, and $\kappa'_j(u) = \kappa'_j(v)$; that is, the edge is monochromatic for $\kappa'$ and $\kappa'_j$.
\end{itemize}
The edge is an ascent of $\kappa$, that is, $\kappa(u) < \kappa(v)$ when either:
\begin{itemize}[nosep,label=--]
  \item $u \in V'_i$ and $v \in V'_j$ with $i < j$, in which case it is an ascent of $\kappa'$; or
  \item $u, v \in V'_j$ for some $j$ and $\kappa'_j(u) < \kappa'_j(v)$, in which case it is monochromatic for $\kappa'$ and an ascent of $\kappa'_j$.
\end{itemize}
The edge is an descent of $\kappa$, that is, $\kappa(u) > \kappa(v)$ when either:
\begin{itemize}[nosep,label=--]
  \item $u \in V'_i$ and $v \in V'_j$ with $i > j$, in which case it is an descent of $\kappa'$; or
  \item $u, v \in V'_j$ for some $j$ and $\kappa'_j(u) > \kappa'_j(v)$, in which case it is monochromatic for $\kappa'$ and an descent of $\kappa'_j$.
\end{itemize}
This breakdown of the cases shows that \eqref{eq:rhs-ascents} and \eqref{eq:lhs-ascents} count exactly the same set of edges of $G$ as ascents.
Thus, corresponding terms are equal, and the comultiplication maps are coassociative.

The only ordered graph which has a colouring using no colours is the empty ordered graph $G_0 = (\varnothing, \varnothing, {<})$.
Thus, the 0-fold comultiplication map $\Delta_0 \colon \og \to \CC(q)$, which is the counit, is the map which extracts the coefficient of $G_0$ in the basis of ordered graphs.

\subsection{Compatibility}
The $r$-fold multiplication maps
\begin{equation}
  \nabla_r \colon \underbrace{\og \otimes \cdots \otimes \og}_{\text{$r$ copies}} \to \og
\end{equation}
turn $\og$ into an associative unital algebra. By extension, the $s$-fold tensor power
\begin{equation}
  \underbrace{\og \otimes \cdots \otimes \og}_{\text{$s$ copies}}
\end{equation}
is also an associative unital algebra under component-wise multiplication.
Dually, the $s$-fold multiplication maps
\begin{equation}
  \Delta_s \colon \og \to \underbrace{\og \otimes \cdots \otimes \og}_{\text{$s$ copies}}
\end{equation}
turn $\og$ into a coassociative counital coalgebra, and under component-wise comultiplication, so is the $r$-fold tensor power
\begin{equation}
  \underbrace{\og \otimes \cdots \otimes \og}_{\text{$r$ copies}}.
\end{equation}
The compatibility axiom for $\og$ to be a bialgebra is that the multiplication maps be coalgebra maps, or equivalently, that the comultiplication maps be algebra maps.
In other words, the two natural ways to define a map
\begin{equation}
  \underbrace{\og \otimes \cdots \otimes \og}_{\text{$r$ copies}}
  \longrightarrow
  \underbrace{\og \otimes \cdots \otimes \og}_{\text{$s$ copies}},
\end{equation}
namely, a multiplication followed by a comultiplication, or a component-wise comultiplication followed by a component-wise multiplication, must agree.
The facts needed to check that this is the case are given in the section on `reshuffling' (\autoref{sec:reshuffling}).

\subsection{Graded-connectedness}
For $\og$ to be graded-connected, we need a decomposition of it of the form
\begin{equation}
  \og = \og_0 \oplus \og_1 \oplus \og_2 \oplus \og_3 \oplus \cdots,
\end{equation}
where we call the space $\og_n$ the homogeneous component of degree $n$.
We get a corresponding decomposition of the space
\begin{equation}
  \underbrace{\og \otimes \og \otimes \cdots \otimes \og}_{\text{$r$ copies}},
\end{equation}
by defining the homogeneous component of degree $n$ to be
\begin{equation}
  \bigoplus (\og_{n_1} \otimes \og_{n_2} \otimes \cdots \otimes \og_{n_r}),
\end{equation}
where the sum is over all sequences of $r$ natural numbers $n_1, n_2, \ldots, n_r$ such that $n_1 + n_2 + \cdots + n_r = n$.
We require that this decomposition be compatible with the multiplicative and the comultiplicative structure, in the sense that
\begin{equation}
  \nabla_r\paren[\Big]{\bigoplus (\og_{n_1} \otimes \cdots \otimes \og_{n_r})}
  \subseteq
  \og_n
\end{equation}
and
\begin{equation}
  \Delta_r(\og_n)
  \subseteq
  \bigoplus (\og_{n_1} \otimes \cdots \otimes \og_{n_r}).
\end{equation}
If this is the case, then $\og$ is graded. For it to be graded-connected, we also need $\og_0$ to be 1-dimensional. If this is the case, then the 0-fold multiplication map $\nabla_0 \colon \CC(q) \to \og_0$ and the 0-fold comultiplication map $\Delta_0 \colon \og_0 \to \CC(q)$, which can be restricted to $\og_0$, are inverses of one another, so that they are linear isomorphisms.

If $\og$ is a graded-connected bialgebra, then it is automatically a Hopf algebra, meaning that there is an antipode map
\begin{equation}
  S \colon \og \to \og,
\end{equation}
as given by, for example, Takeuchi's formula (see \cite{aguiar-mahajan12}*{Section 5}).

In fact, it is easy to check that $\og$ is graded-connected if we take $\og_n$ to be the subspace of $\og$ spanned by the ordered graphs which have $n$ vertices, as this is compatible with the multiplication and comultiplication maps, and there is only one ordered graph with no vertices, namely $G_0 = (\varnothing, \varnothing, {<})$.

\section{A few facts about quasisymmetric functions}\label{sec:qsym}
As alluded to in the \hyperref[sec:intro]{introduction}, Aguiar, Bergeron and Sottile \cite{aguiar-bergeron-sottile06} have shown that the space $\qsym$ of quasisymmetric functions is a universal object for graded-connected Hopf algebras, in a way that lets us easily construct and characterize Hopf-algebraic maps to $\qsym$. In this section, we recall just enough details about $\qsym$ to state this precisely.

As in \autoref{sec:graph}, we take our ground ring to be the field $\CC(q)$.

A \emph{quasisymmetric function} is a formal power series of bounded degree in the countable ordered set of indeterminates $\xx = (x_1, x_2, \ldots)$ with the following invariance property: any two monomials with the same ordered list of nonzero exponents must have the same coefficient.
In other words, the quasisymmetric functions have a basis given by the monomial quasisymmetric functions
\begin{equation}
  M_\alpha = \sum_{i_1 < i_2 < \cdots < i_r} x_{i_1}^{\alpha_1} x_{i_2}^{\alpha_2} \cdots x_{i_r}^{\alpha_r},
\end{equation}
where $\alpha = (\alpha_1, \alpha_2, \ldots, \alpha_r)$ is any finite list of positive integers.
In fact, the space $\qsym$ of quasisymmetric functions is a graded-connected Hopf algebra, where the multiplication maps
\begin{equation}
  \nabla_r \colon \underbrace{\qsym \otimes \cdots \otimes \qsym}_{\text{$r$ copies}} \to \qsym
\end{equation}
and the grading are inherited from the algebra of power series.
We will not need any details about the comultiplication maps
\begin{equation}
  \Delta_r \colon \qsym \to \underbrace{\qsym \otimes \cdots \otimes \qsym}_{\text{$r$ copies}}
\end{equation}
and the antipode
\begin{equation}
  S \colon \qsym \to \qsym,
\end{equation}
other than the fact that they exist.
The \emph{canonical character} on $\qsym$ is the multiplicative linear functional
\begin{equation}
  \zq \colon \qsym \to \CC(q)
\end{equation}
which evaluates the indeterminates $\xx$ at $\xx = (1, 0, 0, 0, \ldots)$, so that
\begin{equation}
  \zq(M_\alpha) = \begin{cases}
    1 &\text{if $\alpha$ has at most one part,} \\
    0 &\text{if $\alpha$ has at least two parts.}
  \end{cases}
\end{equation}
With these definitions in place, we have the following universality result:
\begin{theorem}[\cite{aguiar-bergeron-sottile06}*{Theorem 4.1}]\label{thm:universal}
  For each pair $(\HH, \zeta)$ where $\HH$ is a graded-connected Hopf algebra and $\zeta$ is a multiplicative function from $\HH$ to the ground ring, there exists a unique map of graded Hopf algebras
  \begin{equation}
    \Psi_\zeta \colon \HH \to \qsym
  \end{equation}
  which sends $\zeta$ to $\zq$. Moreover, the coefficient of $M_\alpha$ in $\Psi_\zeta(h)$ is given by
  \begin{equation}
    (\underbrace{\zeta \otimes \zeta \otimes \cdots \otimes \zeta}_{\text{$r$ copies}}) \circ
    (\pi_{\alpha_1} \otimes \pi_{\alpha_2} \otimes \cdots \otimes \pi_{\alpha_r}) \circ
    \Delta_r(h),
  \end{equation}
  where $\alpha = (\alpha_1, \alpha_2, \ldots, \alpha_r)$ is a list of $r$ positive integers, $\Delta_r$ is the $r$-fold comultiplication map of $\HH$, and $\pi_n$ is the projection onto the homogeneous component of degree $n$ of $\HH$.
\end{theorem}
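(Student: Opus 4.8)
The plan is to take the displayed formula for the coefficients of $M_\alpha$ as the \emph{definition} of $\Psi_\zeta$, and then to prove that it is forced (which yields uniqueness) and that it actually defines a morphism of graded Hopf algebras sending $\zeta$ to $\zq$ (which yields existence). The linchpin is the following observation about $\qsym$ by itself: for a composition $\alpha = (\alpha_1, \ldots, \alpha_r)$, the linear functional
\begin{equation*}
  f \longmapsto (\zq \otimes \cdots \otimes \zq) \circ (\pi_{\alpha_1} \otimes \cdots \otimes \pi_{\alpha_r}) \circ \Delta_r(f)
\end{equation*}
on $\qsym$ extracts the coefficient of $M_\alpha$ in $f$. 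Indeed, the iterated comultiplication acts on the monomial basis by deconcatenation, $\Delta_r(M_\gamma) = \sum M_{\gamma^{(1)}} \otimes \cdots \otimes M_{\gamma^{(r)}}$ summed over the ways of writing $\gamma$ as a concatenation $\gamma^{(1)} \cdots \gamma^{(r)}$ of (possibly empty) compositions; the projections force $\abs{\gamma^{(i)}} = \alpha_i$, which is positive, so $\zq$ annihilates every factor with two or more parts, and the only surviving term has $\gamma^{(i)} = (\alpha_i)$ for all $i$, forcing $\gamma = \alpha$. Since this is a finite computation on each homogeneous component, the functional is well defined on all of $\qsym$.

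For uniqueness, suppose $\Psi \colon \HH \to \qsym$ is any morphism of graded Hopf algebras with $\zq \circ \Psi = \zeta$. Applying the functional above to $\Psi(h)$ and using, in turn, that $\Psi$ is a coalgebra map (so $\Delta_r \circ \Psi = \Psi^{\otimes r} \circ \Delta_r$), that $\Psi$ is graded (so it commutes with the homogeneous projections $\pi_n$), and that $\zq \circ \Psi = \zeta$, one collapses the expression to $(\zeta \otimes \cdots \otimes \zeta) \circ (\pi_{\alpha_1} \otimes \cdots \otimes \pi_{\alpha_r}) \circ \Delta_r(h)$. So every coefficient of $\Psi(h)$ is pinned down, which establishes both the uniqueness clause and the asserted formula at once.

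For existence, take the formula as the definition of $\Psi_\zeta$. It lands in $\qsym$ and is graded because $\HH$ is graded-connected: for $h \in \HH_n$ the element $\Delta_r(h)$ lies in the span of tensors of total degree $n$, so only the finitely many compositions $\alpha$ with $\abs{\alpha} = n$ contribute, and $\Psi_\zeta(h)$ is then a finite combination of such $M_\alpha$, homogeneous of degree $n$. That $\Psi_\zeta$ is a coalgebra morphism is a term-by-term matching of the iterated coproducts of $\HH$ and of $\qsym$ using coassociativity of $\Delta^\HH$, entirely analogous to the coassociativity verification carried out for $\og$ in \autoref{sec:graph}; preservation of the counit and the identity $\zq \circ \Psi_\zeta = \zeta$ then drop out of a direct evaluation (in degree $0$ both sides send the unit to $1$; in positive degree $n$, $\zq(\Psi_\zeta(h))$ is simply the coefficient of $M_{(n)}$, which is $\zeta(\pi_n(h)) = \zeta(h)$). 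Preservation of the antipode is then automatic, since a bialgebra map between Hopf algebras commutes with the antipode.

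The remaining point, and the one I expect to be the main obstacle, is that $\Psi_\zeta$ is a morphism of algebras, i.e.\ that $\Psi_\zeta(g g') = \Psi_\zeta(g)\,\Psi_\zeta(g')$. The approach is to compare the coefficient of each $M_\alpha$ on the two sides. On the right, one expands the product in $\qsym$ by the quasi-shuffle (overlapping-shuffle) rule $M_\beta M_\gamma = \sum_\delta c^\delta_{\beta\gamma} M_\delta$. On the left, one uses the bialgebra axiom of $\HH$ (so that $\Delta_r$ is an algebra map and $\Delta_r(gg')$ is the componentwise product $\Delta_r(g)\,\Delta_r(g')$), together with the multiplicativity of $\zeta$ and the additivity of the degree, to rewrite the coefficient of $M_\alpha$ in $\Psi_\zeta(gg')$ as a sum over pairs of Sweedler decompositions, one of $g$ and one of $g'$, whose slot-by-slot degrees add up to $\alpha$. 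The two sums are then matched by a bijection of their index sets: a decomposition on the left records, for each part $\alpha_i$, whether it comes entirely from $g$, entirely from $g'$, or is split between them, and this trichotomy of the positions is precisely the data of a quasi-shuffle of some $\beta$ and some $\gamma$ yielding $\alpha$ together with decompositions of $g$ into $\ell(\beta)$ pieces and $g'$ into $\ell(\gamma)$ pieces (connectedness of $\HH$, which forces the degree-$0$ components to be scalar multiples of the unit, is what makes the bookkeeping close up). This reorganization of the double sum is routine but fiddly, and it is exactly here that all of the hypotheses — graded-connectedness, the bialgebra axiom, and the multiplicativity of $\zeta$ — are used simultaneously.
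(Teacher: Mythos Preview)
The paper does not contain its own proof of this theorem: it is quoted verbatim from Aguiar--Bergeron--Sottile \cite{aguiar-bergeron-sottile06}*{Theorem 4.1} and used as a black box, so there is no in-paper argument to compare against. Your sketch is correct and is essentially the original ABS proof: the identification of the ``coefficient of $M_\alpha$'' functional on $\qsym$ as $(\zq^{\otimes r}) \circ (\pi_{\alpha_1}\otimes\cdots\otimes\pi_{\alpha_r})\circ\Delta_r$ is exactly their key observation, uniqueness follows by pulling this back through a putative $\Psi$, and existence is checked coordinatewise, with the algebra-map step handled via the quasi-shuffle description of $M_\beta M_\gamma$ together with multiplicativity of $\zeta$ and connectedness of $\HH$. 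The only place to tighten is the coalgebra-map verification, which you wave at; it really does reduce to coassociativity of $\Delta^{\HH}$ once you use that $\Delta_s$ on $\qsym$ deconcatenates in the $M$-basis, so you might say that explicitly rather than pointing to \autoref{sec:graph}.
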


\section{The \texorpdfstring{$q$}{q}-chromatic quasisymmetric function, revisited}\label{sec:qcsf2}
Now, let us use the recipe of \autoref{sec:qsym} to reconstruct the Shareshian--Wachs $\csf_q$ as a map of graded Hopf algebras between $\og$ and $\qsym$.
We will also introduce two variants: the strict and the weak chromatic quasisymmetric functions.

Consider the three characters $\zeta_0, \zeta_1, \zeta_q \colon \og \to \CC(q)$ defined on the basis of ordered graphs by
\begin{align}
  \zeta_0(G) &= \begin{cases}
    1 &\text{if $G$ has no edges,} \\
    0 &\text{otherwise,}
  \end{cases} \\
  \zeta_1(G) &= 1, \\
  \zeta_q(G) &= q^{(\text{\# edges of $G$})}.
\end{align}
Since the multiplication of ordered graphs is a disjoint union on sets of edges, these functions are multiplicative.
By \autoref{thm:universal}, there are corresponding maps $\Psi_0, \Psi_1, \Psi_q \colon \og \to \qsym$.
\begin{theorem}\label{thm:psi0}
  For every ordered graph $G$, we have $\Psi_0(G) = \csf_q(G)$.
\end{theorem}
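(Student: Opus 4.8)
The plan is to use the explicit formula for the coefficients of $\Psi_0(G)$ provided by \autoref{thm:universal} and to match it term-by-term with the monomial expansion of $\csf_q(G)$. First I would record the (routine but necessary) observation that $\csf_q(G)$ is genuinely a quasisymmetric function: it has bounded degree, since every proper colouring contributes a monomial of degree $\abs{V}$, and by \eqref{eq:def-qcsf} a relabelling of the colours which preserves their relative order preserves the number of ascents of each colouring, so any two monomials with the same ordered list of nonzero exponents carry the same coefficient. Hence it makes sense to compare $\Psi_0(G)$ and $\csf_q(G)$ coefficientwise in the monomial basis $\set{M_\alpha}$.

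Next, fix a composition $\alpha = (\alpha_1, \ldots, \alpha_r)$ of positive integers and compute the coefficient of $M_\alpha$ in $\Psi_0(G)$. By \autoref{thm:universal} applied to the character $\zeta_0$, this coefficient is $(\zeta_0 \otimes \cdots \otimes \zeta_0) \circ (\pi_{\alpha_1} \otimes \cdots \otimes \pi_{\alpha_r}) \circ \Delta_r(G)$. Substituting the definition \eqref{eq:redef-comult} of $\Delta_r$ and distributing, the projection $\pi_{\alpha_1} \otimes \cdots \otimes \pi_{\alpha_r}$ retains only those colourings $\kappa \colon V \to [r]$ with $\abs{\kappa^{-1}(i)} = \alpha_i$ for every $i$, and then $\zeta_0 \otimes \cdots \otimes \zeta_0$ sends the surviving term $G|_{\kappa^{-1}(1)} \otimes \cdots \otimes G|_{\kappa^{-1}(r)}$ to $\prod_i \zeta_0\paren{G|_{\kappa^{-1}(i)}}$. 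Since the edges surviving in a restriction to a colour class are exactly the monochromatic ones, this product equals $1$ precisely when every colour class is an independent set of $G$, i.e.\ when $\kappa$ is proper, and $0$ otherwise. Therefore the coefficient of $M_\alpha$ in $\Psi_0(G)$ is $\sum q^{(\text{\# ascents of $\kappa$ on $G$})}$, the sum ranging over all proper colourings $\kappa \colon V \to [r]$ with $\abs{\kappa^{-1}(i)} = \alpha_i$ for all $i$.

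Finally I would compare this with the coefficient of $M_\alpha$ in $\csf_q(G)$. Because $\csf_q(G)$ is quasisymmetric, this coefficient equals the coefficient of the single monomial $x_1^{\alpha_1} x_2^{\alpha_2} \cdots x_r^{\alpha_r}$, which by \eqref{eq:def-qcsf} is $\sum q^{(\text{\# ascents of $\kappa$ on $G$})}$ over all proper $\kappa \colon V \to \PP$ with $\xx_\kappa = x_1^{\alpha_1} \cdots x_r^{\alpha_r}$; such a $\kappa$ is exactly a proper colouring using precisely the colours $1, \ldots, r$ with multiplicities $\alpha_1, \ldots, \alpha_r$, which — all parts $\alpha_i$ being positive — is the same as a proper colouring $\kappa \colon V \to [r]$ with $\abs{\kappa^{-1}(i)} = \alpha_i$. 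The two expressions coincide, and since $\alpha$ was arbitrary, $\Psi_0(G) = \csf_q(G)$. There is no serious obstacle; the only point requiring care is the bookkeeping that identifies colourings into the infinite palette $\PP$ having a prescribed exponent composition with colourings onto the finite set $[r]$ having prescribed fibre sizes, together with the remark that the number of ascents depends only on the relative order of the colours used, so nothing is lost in passing between the two descriptions.
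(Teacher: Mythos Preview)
Your proof is correct and follows essentially the same route as the paper's: both compute the coefficient of $M_\alpha$ in $\Psi_0(G)$ via \autoref{thm:universal}, reduce it to a sum of $q^{(\text{\# ascents})}$ over proper colourings $\kappa \colon V \to [r]$ with prescribed fibre sizes, and then identify this with the coefficient of $M_\alpha$ in $\csf_q(G)$. The only cosmetic difference is that the paper phrases the last identification as factoring a proper colouring $V \to \PP$ through an order-preserving injection $[r] \hookrightarrow \PP$, whereas you invoke quasisymmetry directly to read off the coefficient of $x_1^{\alpha_1}\cdots x_r^{\alpha_r}$; these are the same observation.
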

\begin{proof}
  This follows by unrolling the definition of $\Psi_0(G)$.

  Let $\alpha = (\alpha_1, \alpha_2, \ldots, \alpha_r)$ be a list of $r$ positive integers.
  Then, the coefficient of $M_\alpha$ in $\Psi_0(G)$ is
  \begin{equation}
    (\underbrace{\zeta_0 \otimes \cdots \otimes \zeta_0}_{\text{$r$ copies}}) \circ
    (\pi_{\alpha_1} \otimes \cdots \otimes \pi_{\alpha_r}) \circ
    \Delta_r(G).
  \end{equation}
  By definition, the $r$-fold comultiplication of $G$ is
  \begin{equation}
    \Delta_r(G) = \sum_{\substack{\kappa \colon V \to [r] \\ \text{arbitrary}}} q^{(\text{\# ascents of $\kappa$ on $G$})} \,
    G|_{V_1} \otimes \cdots \otimes G|_{V_r},
  \end{equation}
  where $V_i = \kappa^{-1}(i)$ is the set of vertices assigned colour $i$.
  We can substitute this in the previous expression to get
  \begin{equation}
    \sum_{\substack{\kappa \colon V \to [r] \\ \text{arbitrary}}}
    q^{(\text{\# ascents of $\kappa$ on $G$})} \,
    \paren[\big]{\zeta_0 \circ \pi_{\alpha_1}(G|_{V_1})}
    \otimes \cdots \otimes
    \paren[\big]{\zeta_0 \circ \pi_{\alpha_1}(G|_{V_r})}.
  \end{equation}
  By the definition of the projection $\pi_n$, the summand is zero unless each $V_i$ has exactly $\alpha_i$ vertices, so that $\kappa$ is a colouring where colour $i$ is used $\alpha_i$ times.
  Conversely, $\pi_{\alpha_i}(G|_{V_i}) = G|_{V_i}$ for each $i$ if $\kappa$ is such a colouring.
  By the definition of $\zeta_0$, the summand is zero if $\kappa$ has any monochromatic edges. Conversely, if $\kappa$ is a proper colouring, then $\zeta_0(G|_{V_i}) = 1$ for each $i$.
  Combining these two facts, we can rewrite the coefficient of $M_\alpha$ in $\Psi_0(G)$ as
  \begin{equation}
    \sum_{\substack{\kappa \colon V \to [r] \\ \text{proper} \\ \abs{V_i} = \alpha_i}}
    q^{(\text{\# ascents of $\kappa$ on $G$})}
  \end{equation}
  Now, recall that the monomial quasisymmetric function $M_\alpha$ is defined by
  \begin{equation}
    M_\alpha = \sum_{j_1 < \cdots < j_r} x_{j_1}^{\alpha_1} \cdots x_{j_r}^{\alpha_r}.
  \end{equation}
  A proper colouring $\kappa' \colon V \to \PP$ which uses colours $j_1 < \cdots < j_r$, and uses them $\alpha_1, \ldots, \alpha_r$ times respectively, can be uniquely factored as a composition
  \begin{equation}
    V \overset{\kappa}{\longrightarrow} [r] \hookrightarrow \PP,
  \end{equation}
  where $\kappa$ a proper colouring where colour $i$ is used $\alpha_i$ times, and the injection on the right is order preserving.
  Thus, we can write $\Psi_0(G)$ as
  \begin{equation}
    \Psi_0(G) = \sum_{\substack{\kappa \colon V \to \PP \\ \text{proper}}} q^{(\text{\# ascents of $\kappa$ on $G$})} \, \xx_\kappa,
  \end{equation}
  which is exactly the definition \eqref{eq:def-qcsf} of $\csf_q(G, \xx)$.
\end{proof}

\begin{remark}\label{rem:athanasiadis}
  This approach to the $\csf_q$, and indeed the very definition of the Hopf algebra $\og$, are directly inspired by a formula proven by Athanasiadis \cite{athanasiadis15}*{Equation 17}, which can be interpreted as a proof of the equation
  \begin{equation}
    \Delta_r\paren[\big]{\csf_q(G)} = \csf_q\paren[\big]{\Delta_r(G)}
  \end{equation}
  in the power-sum basis of symmetric functions. The fact that $\csf_q$ is multiplicative was already well-known, and this suggestion that it is also comultiplicative prompted the search for an underlying Hopf algebra.
\end{remark}

In a proper colouring, every edge is either an ascent or a descent, never monochromatic, so there is no reason to distinguish between strict and weak inequalities.
However, for arbitrary colourings, there are two sensible generalizations of the notion of `ascent'.
So far we have taken the convention that an ascent of a colouring $\kappa$ is a \emph{strict} ascent, that is, an edge $\set{u, v}$ with $u < v$ and $\kappa(u) < \kappa(v)$.
We could also have considered \emph{weak} ascents, that is, an edge $\set{u, v}$ with $u < v$ and $\kappa(u) \leq \kappa(v)$.
Given these definitions, we have the following interpretations of the quasisymmetric functions $\Psi_1(G)$ and $\Psi_q(G)$ for an ordered graph $G$, which can be proved in much the same way as \autoref{thm:psi0}:
\begin{align}
  \Psi_1(G) = \sum_{\substack{\kappa \colon V \to \PP \\ \text{arbitrary}}} q^{(\text{\# strict ascents of $\kappa$ on $G$})} \, \xx_\kappa, \\
  \Psi_q(G) = \sum_{\substack{\kappa \colon V \to \PP \\ \text{arbitrary}}} q^{(\text{\# weak ascents of $\kappa$ on $G$})} \, \xx_\kappa.
\end{align}
Note that in both cases, we are allowing arbitrary colourings, rather than just proper colourings. We will call these the \emph{strict chromatic quasisymmetric function} and the \emph{weak chromatic quasisymmetric function}, respectively.
More generally, for any $t \in \CC(q)$, we could consider the multiplicative character
\begin{equation}
  \zeta_t(G) = t^{(\text{\# edges of $G$})},
\end{equation}
and the associated morphism $\Psi_t$ would have the interpretation
\begin{equation}
  \Psi_t(G) = \sum_{\substack{\kappa \colon V \to \PP \\ \text{arbitrary}}} t^{(\text{\# monochromatic edges of $\kappa$ on $G$})} q^{(\text{\# ascents of $\kappa$ on $G$})} \, \xx_\kappa.
\end{equation}

\section{The subalgebra of Dyck paths}\label{sec:dyck}
In this section, we introduce a subclass of ordered graphs which is closed under lexicographic union and restriction, so that they span a Hopf subalgebra $\dyck$ of $\og$. For the rest of the paper we will focus on $\dyck$ rather than $\og$, since Hessenberg varieties are only defined for these ordered graphs.

Fix $n \geq 0$.
A \emph{Hessenberg function} is a function $h \colon [n] \to [n]$ which is:
\begin{itemize}[nosep,label=--]
  \item \emph{extensive}, meaning that $i \leq h(i)$ for $i = 1, \ldots, n$, and
  \item \emph{increasing}, meaning that $h(i) \leq h(i+1)$ for $i = 1, \ldots, n-1$.
\end{itemize}
Given such a function, there is an associated poset $P(h)$ on $[n]$, defined by
\begin{equation}
  i <_h j
  \quad \Longleftrightarrow \quad
  h(i) < j.
\end{equation}
There is also an associated ordered graph $G(h)$ on $[n]$, where the total order on $[n]$ is the usual numeric order, and the edges are given by
\begin{equation}
  \set{i, j} \in E(h)
  \quad \Longleftrightarrow \quad
  i < j \leq h(i).
\end{equation}
The class of posets of the form $P(h)$ is the class of unit interval orders (see \cite{shareshian-wachs14}*{Proposition 4.1}), that is, posets whose elements can be modelled as unit intervals on the real line, where an interval is less than another if it is completely to the left of the other.
The ordered graph $G(h)$ is the incomparability graph of $P(h)$, meaning that there is an edge between vertices $i$ and $j$ iff neither $i <_h j$ nor $i >_h j$.
Thus, the class of ordered graphs of the form $G(h)$ could also be called the class of unit interval overlap graphs.
We have the following alternate characterization of these ordered graphs.
\begin{proposition}
  If $G = ([n], E, {<})$ is an ordered graph on the set $[n]$ with the usual numeric order, then the following are equivalent:
  \begin{enumerate}[nosep,label=(\arabic*)]
    \item $G = G(h)$ for some Hessenberg function $h$; and
    \item if $\set{i, j}$ is an edge and $i \leq i' < j' \leq j$, then $\set{i', j'}$ is an edge.
  \end{enumerate}
\end{proposition}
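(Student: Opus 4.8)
The plan is to prove the two implications separately, unrolling the definitions of the Hessenberg function $h$, the associated graph $G(h)$, and the ``interval'' condition (2).

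First I would show that (1) implies (2). Suppose $G = G(h)$, so that $\set{i,j} \in E$ exactly when $i < j \leq h(i)$. Given an edge $\set{i,j}$ and indices $i \leq i' < j' \leq j$, I want to conclude $\set{i',j'} \in E$, i.e.\ $j' \leq h(i')$. Since $h$ is increasing and $i' \geq i$, we have $h(i') \geq h(i) \geq j \geq j'$, so $j' \leq h(i')$; combined with $i' < j'$ this gives $\set{i',j'} \in E$. (Here I only used that $h$ is increasing, not extensivity.)

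Next I would show that (2) implies (1). Given $G = ([n], E, {<})$ satisfying the interval condition, I define $h \colon [n] \to [n]$ by setting $h(i)$ to be the largest $j$ such that $\set{i,j} \in E$, or $h(i) = i$ if $i$ has no neighbour larger than itself. The main things to check are: (a) $h$ is extensive, which is immediate since $h(i) \geq i$ by construction; (b) $h$ is increasing, i.e.\ $h(i) \leq h(i+1)$; and (c) $G(h) = G$. For (b): if $h(i) = i$ there is nothing to prove since $h(i+1) \geq i+1 > i$; otherwise $\set{i, h(i)} \in E$, and if $h(i) > i+1$ then the interval condition applied with $i \leq i+1 < h(i) \leq h(i)$ gives $\set{i+1, h(i)} \in E$, hence $h(i+1) \geq h(i)$ (the remaining case $h(i) = i+1$ gives $h(i+1) \geq i+1 = h(i)$ directly). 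For (c): if $\set{i,j} \in E$ with $i < j$ then $j \leq h(i)$ by definition of $h$, so $\set{i,j}$ is an edge of $G(h)$; conversely if $i < j \leq h(i)$ then $\set{i, h(i)} \in E$ and the interval condition with $i \leq i < j \leq h(i)$ yields $\set{i,j} \in E$. So $E = E(h)$, and since both graphs carry the usual numeric order on $[n]$, $G = G(h)$.

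I do not expect a genuine obstacle here; the statement is essentially a dictionary translation. The one spot requiring a little care is verifying that $h$ is \emph{increasing} in the (2) $\Rightarrow$ (1) direction — one must handle the degenerate case where $i$ has no right-neighbour and correctly invoke the interval condition to propagate the right-endpoint of an edge from $i$ to $i+1$. Everything else is a direct check against the definitions already set up in this section.
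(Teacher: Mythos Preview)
Your proof is correct and follows essentially the same approach as the paper: the same chain of inequalities for $(1)\Rightarrow(2)$, the same definition of $h(i)$ as the largest right-neighbour (or $i$ by default), and the same case split for monotonicity and the same two-way edge comparison for $(2)\Rightarrow(1)$. The only cosmetic difference is that your phrasing ``the largest $j$ such that $\{i,j\}\in E$'' should really say ``the largest $j>i$'', though your default clause makes the intended meaning clear.
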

\begin{proof}
  {$(1) \Rightarrow (2)$.}
  If $G = G(h)$ and $\set{i, j} \in E$ and $i \leq i' < j' \leq j$, then
  \begin{equation}
    i' < j' \leq j \leq h(i) \leq h(i')
  \end{equation}
  so that $i' < j' \leq h(i')$ and $\set{i', j'}$ is an edge of $G(h)$ as well.

  \medskip\noindent
  {$(2) \Rightarrow (1)$.}
  Define the function $h \colon [n] \to [n]$ by setting $h(i)$ to be the largest $j$ such that $i < j$ and $\set{i, j}$ is an edge, or $h(i) = i$ if there is no such edge.
  Then, the function $h$ is extensive by construction. The function $h$ is also increasing, making it a Hessenberg function:
  if $h(i) = i$ or $h(i) = i+1$, then $h(i) \leq i+1 \leq h(i+1)$;
  otherwise $\set{i, h(i)}$ is an edge and $i \leq i+1 < h(i) \leq h(i)$, so that $\set{i+1, h(i)}$ is also an edge and $h(i+1) \geq h(i)$ by maximality.
  Again by maximality, we have
  \begin{equation}
    \set{i, j} \in E
    \quad \Longrightarrow \quad
    i < j \leq h(i),
  \end{equation}
  and by condition $(2)$ for the edge $\set{i, h(i)}$ if $h(i) \neq i$, we have
  \begin{equation}
    \set{i, j} \in E
    \quad \Longleftarrow \quad
    i < j \leq h(i),
  \end{equation}
  so that in fact $G = G(h)$ for the Hessenberg function $h$ as required.
\end{proof}

Since condition $(2)$ is a kind of closure condition, it follows that the class of ordered graphs of the form $G(h)$ is closed under taking lexicographic unions and restrictions, and it spans a Hopf subalgebra of $\og$.
\begin{corollary}
  Let $\dyck$ be the subspace of $\og$ spanned by the ordered graphs of the form $G(h)$ for all Hessenberg functions $h \colon [n] \to [n]$ for all $n \geq 0$.
  Then, $\dyck$ is closed under the multiplication and comultiplication maps of $\og$, so that it is a graded-connected Hopf subalgebra.
\end{corollary}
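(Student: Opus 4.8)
The plan is to derive everything from condition~(2) of the proposition just proved. I will refer to that condition as the \emph{interval-closure property} of an ordered graph $G = (V, E, {<})$: for every edge $\set{i,j}$ with $i < j$ and every pair of vertices $i', j'$ with $i \le i' < j' \le j$, the pair $\set{i',j'}$ is again an edge. Since this condition never mentions a specific vertex set, it is invariant under isomorphism of ordered graphs, and by the proposition it characterizes exactly which basis elements of $\og$ lie in $\dyck$: a basis element $G$ lies in $\dyck$ if and only if $G$ has the interval-closure property. So it suffices to show that this property is preserved by restriction and by lexicographic union.

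First I would handle restriction. If $G$ has the interval-closure property and $U \subseteq V$, then for any edge $\set{i,j}$ of $G|_U$ and any $i', j' \in U$ with $i \le i' < j' \le j$, the pair $\set{i',j'}$ is an edge of $G$ (applying the property in $G$) and hence of $G|_U$; so $G|_U$ has the interval-closure property. This immediately gives $\Delta_r(\dyck) \subseteq \dyck^{\otimes r}$, since $\Delta_r$ expands a basis element $G \in \dyck$ as a sum of tensors $G|_{V_1} \otimes \cdots \otimes G|_{V_r}$ whose tensor factors are all restrictions of $G$.

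Next I would handle lexicographic union, which is the one step that needs an actual (small) observation. Given $G_1, \dots, G_r$ with the interval-closure property and $G = G_1 \olex \cdots \olex G_r = (V, E, {<})$, the point is that each block $V_i$ is an \emph{interval} of the total order $(V, {<})$: by the definition of $\olex$, a vertex strictly between two vertices of $V_i$ must itself lie in $V_i$. Now take an edge $\set{a,b}$ of $G$ with $a < b$ and any $a', b'$ with $a \le a' < b' \le b$. Because $G$ has no edges between distinct blocks, $a$ and $b$ lie in a common block $V_i$; because $V_i$ is an interval, $a'$ and $b'$ lie in $V_i$ too; and because $G_i$ has the interval-closure property, $\set{a',b'}$ is an edge of $G_i$, hence of $G$. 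So $G$ has the interval-closure property, giving $\nabla_r(\dyck^{\otimes r}) \subseteq \dyck$.

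Finally I would assemble the Hopf-algebra statement. The empty ordered graph $G_0$ is $G(h)$ for the unique Hessenberg function on $[0]$, so it lies in $\dyck$; thus $\dyck$ contains the image of $\nabla_0$ and is a sub-bialgebra of $\og$. It inherits the grading by number of vertices ($\dyck_n$ being spanned by the $G(h)$ with $h \colon [n] \to [n]$), and $\dyck_0 = \CC(q)\, G_0$ is one-dimensional, so $\dyck$ is graded-connected. Since a graded-connected bialgebra is automatically a Hopf algebra whose antipode is produced from $\nabla_r$ and $\Delta_r$ by a universal formula (e.g.\ Takeuchi's, as recalled in \autoref{sec:graph}), the antipode of $\og$ restricts to $\dyck$, and $\dyck$ is a graded-connected Hopf subalgebra. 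I do not expect any genuine obstacle; the only substantive verification is the ``block is an interval'' observation for lexicographic unions, and that is essentially immediate from the definition.
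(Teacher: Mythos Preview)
Your proposal is correct and follows exactly the approach the paper indicates: the paper's entire ``proof'' is the one sentence preceding the corollary (``Since condition~(2) is a kind of closure condition, it follows that the class of ordered graphs of the form $G(h)$ is closed under taking lexicographic unions and restrictions''), and you have simply spelled out those details faithfully. The one substantive observation you isolate---that each block $V_i$ is an interval in the lexicographic union, so an edge contained in some $V_i$ forces any sub-interval to lie in the same $V_i$---is precisely what makes condition~(2) a ``closure condition'' in the paper's sense.
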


We call $\dyck$ the Hopf algebra of Dyck paths, since Hessenberg functions have yet another representation, as Dyck paths.
These are paths which start on the horizon, take unit steps either Northeast or Southeast, always stay weakly above the horizon, and end on the horizon.
In this representation, multiplication is simply concatenation.

\section{A bit of geometry: Hessenberg varieties}\label{sec:hess}
Having defined Hessenberg functions in \autoref{sec:dyck}, we can now define Hessenberg varieties, which live inside the full flag variety of $\CC^n$.
We give the strict minimum needed to situate the claims in this paper with respect to the literature on Hessenberg varieties, and to formulate the Shareshian--Wachs conjecture, since after this section we will be working with Tymoczko's very explicit algebro-combinatorial description of the equivariant cohomology rings, rather than with the geometric objects themselves.

Fix $n \geq 0$.
The \emph{flag variety} $\flag(\CC^n)$ consists of the complete flags in $\CC^n$, that is, sequences of nested subspaces of the form $F_\bullet = (F_1 \subset F_2 \subset \cdots \subset F_n)$ with $\dim(F_i) = i$ for $i = 1, 2, \ldots, n$. Given an $n \times n$ complex matrix $M$ and a Hessenberg function $h \colon [n] \to [n]$, the corresponding \emph{Hessenberg variety} is the subvariety of $\flag(\CC^n)$ defined by
\begin{equation}
  \hess(M, h) = \set{F_\bullet \mid \text{$MF_i \subseteq F_{h(i)}$ for each $i = 1, 2, \ldots, n$}}.
\end{equation}
Note that there are many choices of $M$ which give the same set of flags; for example, $\hess(M, h)$ and $\hess(M', h)$ are the same if $M' = aM + bI$, where $a$ is a nonzero complex number, $b$ is any complex number, and $I$ is the identity matrix. Also, many choices of $M$ give isomorphic subvarieties; in particular, if $M$ and $M'$ are conjugate matrices, then $\hess(M, h)$ and $\hess(M', h)$ are related by a change of basis of the ambient space $\CC^n$. Thus, it makes sense to consider choices of $M$ with a specified Jordan block structure, or with conditions on the eigenvalues. Various adjectives get attached to Hessenberg varieties based on such restrictions, so that $\hess(M, h)$ is called:
\begin{itemize}[nosep,label=--]
  \item \emph{regular} if every Jordan block of $M$ has a different eigenvalue;
  \item \emph{nilpotent} if every Jordan block of $M$ has eigenvalue 0;
  \item \emph{semisimple} if $M$ has $n$ Jordan blocks of size 1.
\end{itemize}
These adjectives can of course be combined:
\begin{itemize}[nosep,label=--]
  \item $\hess(M, h)$ is \emph{regular nilpotent} if $M$ has a single Jordan block of size $n$, with eigenvalue 0. This is the case studied in \cite{abe-harada-horiguchi-masuda15}.
  \item $\hess(M, h)$ is \emph{minimal nilpotent} if $M$ has one Jordan block of size 2 and $n-2$ Jordan blocks of size 1, all with eigenvalue 0. This is the case studied in \cite{abe-crooks15}.
  \item $\hess(M, h)$ is \emph{regular semisimple} if $M$ is diagonalizable, with $n$ distinct eigenvalues. This is the case that the Shareshian--Wachs conjecture is concerned with, and seems to be the most studied.
\end{itemize}

For the rest of the section, let $M$ be a fixed diagonal matrix with distinct diagonal entries, which we will write as $D$ to emphasize that we are focusing on the case where $\hess(D, h)$ is regular semisimple.
Let $T$ be the group of invertible diagonal matrices, which is isomorphic to the complex torus $(\CC^*)^n$.
Every matrix in $T$ commutes with $D$, so the variety $\hess(D, h)$ is invariant under the action of the group $T$ on $\flag(\CC^n)$.
Thus, we can consider the equivariant cohomology ring, which we will abbreviate as
\begin{equation}
  H^*_T(h) = H^*_T\paren[\big]{\hess(D, h)},
\end{equation}
and which is a graded-connected algebra over $\CC$.
Using the tools of GKM theory \cite{goresky-kottwitz-macpherson98}, Tymoczko has given an explicit description of this ring \cite{tymoczko07b}.
GKM theory also implies that $H^*_T(h)$ is a free module over the equivariant cohomology ring $H^*_T(\text{pt})$ of a point, which is isomorphic to a polynomial ring in $n$ indeterminates, say $\CC[\LL]$, where $\LL = (L_1, L_2, \ldots, L_n)$ is a set of $n$ indeterminates.
Thus, we have a tower of graded-connected $\CC$-algebras
\begin{equation}\label{eq:ltower}
  \CC \subseteq \CC[\LL] \subseteq H^*_T(h),
\end{equation}
where $H^*_T(h)$ is a free module over the subring $\CC[\LL]$.
A further feature of GKM theory is that the ordinary cohomology ring can be recovered as a ring quotient from the equivariant cohomology:
\begin{equation}
  H^*(h) = H^*_T(h) / \gen{\LL} H^*_T(h).
\end{equation}
This amounts to setting each of the indeterminates $L_1, \ldots, L_n$ to zero.

In \cite{tymoczko07b}*{Section 3.1}, Tymoczko defines the \emph{dot action}, which is an action of the symmetric group $S_n$ on $H^*_T(h)$.
Note that this action is $\CC$-linear and respects the grading on $H^*_T(h)$.
In the tower of \eqref{eq:ltower}, the dot action fixes $\CC$ pointwise, and it sends $\CC[\LL]$ to itself; explicitly, if $w \colon [n] \to [n]$ is a permutation, then under the dot action $w \cdot L_i = L_{w(i)}$.
This means that the action of $w \in S_n$ on $H^*_T(h)$ is $\CC$-linear, and \emph{twisted} $\CC[\LL]$-linear.
Still, the following notion of the \emph{graded trace} of $w$ is well-defined:
we will write
\begin{equation}
  \tr_q\paren[\big]{w, H^*_T(h), \CC[\LL]} =
  \sum_i q^{\deg(e_i)} (\text{coefficient of $e_i$ in $w \cdot e_i$})
  \in \CC[[q]],
\end{equation}
where the elements $e_i$ form a homogeneous basis of $H^*_T(h)$ over $\CC[\LL]$, $\deg(e_i)$ is the degree of $e_i$, and the result is a formal power series in the indeterminate $q$ with coefficients in $\CC$.
The careful reader will have spotted that this `definition' packs quite a few implicit assumptions:
\begin{itemize}[nosep,label=--]
  \item Naively, the coefficient of $e_i$ in $w \cdot e_i$ should be an element of $\CC[\LL]$, not $\CC$. However, the dot action preserves the grading, so the coefficient is in the degree-zero part of $\CC[\LL]$, which is $\CC$.
  \item In order to speak of a formal power series, there should only be finitely many contributions to the coefficient of each power of $q$. This is guaranteed because each homogeneous graded piece of $H^*_T(h)$ is finite-dimensional over $\CC$. In fact, $H^*_T(h)$ is finite-dimensional over $\CC[\LL]$, so the result is a polynomial in $q$, but we will use this notation for cases where the result is genuinely a power series.
  \item We are calling this the trace of $w$ over $\CC[\LL]$, but $w$ is not a $\CC[\LL]$-linear map! However, it can be checked that this definition does not depend on the choice of homogeneous basis elements $e_i$.
\end{itemize}
Given this notion of trace, we can define the \emph{graded Frobenius characteristic} of the action of $\sll$ on $H^*_T(h)$ over $\CC[\LL]$ as
\begin{equation}
  \frob_q\paren[\big]{\sll, H^*_T(h), \CC[\LL]} = \frac{1}{n!} \sum_{w \in S_n} \tr_q\paren[\big]{w, H^*_T(h), \CC[\LL]} p_{(\text{cycle type of $w$})},
\end{equation}
where $p_\lambda \in \sym$ is a power-sum symmetric function.
Let $\omega \colon \sym \to \sym$ be the usual involution, which is the Hopf map defined by $\omega(p_k) = (-1)^{k+1} p_k$ for $k = 1, 2, \ldots$.
With all of these definitions, we can finally state the Shareshian--Wachs conjecture:
\begin{conjecture}[\cite{shareshian-wachs12}*{Conjecture 5.3}]
  For all Hessenberg functions $h$,
  \begin{equation}
    \omega\paren[\big]{\frob_q\paren[\big]{\sll, H^*_T(h), \CC[\LL]}} = \csf_q\paren[\big]{G(h)}.
  \end{equation}
\end{conjecture}
In view of \autoref{thm:psi0}, we can rephrase this as
\begin{equation}
  \omega\paren[\big]{\frob_q\paren[\big]{\sll, H^*_T(h), \CC[\LL]}} = \Psi_0\paren[\big]{G(h)},
\end{equation}
which suggests a two-step approach to the proof:
\begin{enumerate}[nosep]
  \item show that $\omega\paren[\big]{\frob_q\paren[\big]{\sll, H^*_T(h), \CC[\LL]}}$, as a function of the Dyck path $G(h)$, respects the multiplication and comultiplication, so that it is a map of graded Hopf algebras from $\dyck$ to $\sym$; then
  \item compute the character for this map by composing it with $\zq$, and show that the result is $\zeta_0$.
\end{enumerate}
This is what we will do, except that we will go through an intermediate step.
We will identify a polynomial subring of $H^*_T(h)$ different from $\CC[\LL]$, which we call $\CC[\RR]$, where $\RR = (R_1, R_2, \ldots, R_n)$ is another set of indeterminates, giving us a second tower of graded $\CC$-algebras,
\begin{equation}\label{eq:rtower}
  \CC \subseteq \CC[\RR] \subseteq H^*_T(h).
\end{equation}
For this tower, $H^*_T(h)$ is again a free module over the subring $\CC[\RR]$, but the dot action fixes $\CC[\RR]$ pointwise: for every permutation $w \in S_n$, we have $w \cdot R_i = R_i$.
Thus, the action of $w$ on $H^*_T(h)$ is simply $\CC[\RR]$-linear, rather than twisted $\CC[\RR]$-linear.
This will make it easier to carry out the two-step process described above for the tower \eqref{eq:rtower}, and show that
\begin{equation}
  \omega\paren[\big]{\frob_q\paren[\big]{\sll, H^*_T(h), \CC[\RR]}} = \Psi_q\paren[\big]{G(h)}.
\end{equation}
The final step will be to show that replacing $\CC[\RR]$ by $\CC[\LL]$ in the tower has the same effect as replacing the character $\zeta_q$ by the character $\zeta_0$.

Given that the ring $\CC[\LL]$ already has a geometric interpretation, in terms of the tangent of the torus $T$, and that the ring $\CC[\RR]$ seems to fit fairly naturally into the algebraic side of the picture, this raises the following question:
\begin{question}
  What, if anything, does the ring $\CC[\RR]$ correspond to on the Hessenberg variety $\hess(D, h)$, geometrically speaking?
\end{question}

\section{Tymoczko's rings}\label{sec:tymo}
In this section, we give Tymoczko's combinatorial construction of the equivariant cohomology ring $H^*_T(h)$ of the regular semisimple Hessenberg variety $\hess(D, h)$ in terms of the moment graph from GKM theory, which comes equipped with her `dot action' by the symmetric group $S_n$ (see \cite{tymoczko07b} for details).
For a fixed $n$, the rings $H^*_T(h)$ for all Hessenberg functions $h \colon [n] \to [n]$ can all be realized as subrings of a single ring $\TT$, which we describe first.
We then define the subring $\TT_h$, which is isomorphic to $H^*_T(h)$, for each Hessenberg function $h$.
In terms of notation, we write $\LL = (L_1, \ldots, L_n)$ for the indeterminates $(t_1, \ldots, t_n)$ from \cite{tymoczko07b}, to emphasize that these indeterminates naturally act on $\TT$ from the left, and introduce new indeterminates $\RR = (R_1, \ldots, R_n)$ which act naturally from the right; it is our hope that this distinction between left and right indeterminates will help the reader avoid confusion when performing explicit computations, especially once the actions of the symmetric groups $S(\LL)$ and $S(\RR)$ on $\TT$ are thrown in the mix.

\subsection{The big ring}
Fix $n \geq 0$; let $\LL = (L_1, \ldots, L_n)$ and $\RR = (R_1, \ldots, R_n)$ be two sets of indeterminates.
We will write $\sll$ for the group of permutations of the indeterminates $\LL$, $\srr$ for the group of permutations of the indeterminates $\RR$, and $\slr$ for the set of bijections from $\RR$ to $\LL$; the peculiar notation is to emphasize that $\sll$ acts on $\slr$ on the left by function composition, and $\srr$ acts on the right.
As a ring, and as a left $\CC[\LL]$-module, Tymoczko's ring is the cartesian product
\begin{equation}\label{eq:ltymo}
  \TT = \prod_{\beta\in\slr} \CC[\LL].
\end{equation}
We will write $\one_\beta$ for the element of $\TT$ which is 1 at coordinate $\beta$ and 0 at all other coordinates, so that a typical element of $\TT$ is of the form
\begin{equation}\label{eq:ltypical}
  \sum_{\beta\in\slr} f_\beta(L_1, \ldots, L_n) \, \one_\beta,
\end{equation}
and the unit element is $\one = \sum_\beta \one_\beta$.
We say that the element \eqref{eq:ltypical} is \emph{homogeneous} of degree $d$ if each polynomial $f_\beta$ is homogeneous of total degree $d$ in the indeterminates $\LL$.
With this notion of grading, the ring $\TT$ is a graded-connected $\CC$-algebra, since the degree zero part is $\CC\one$, which is one-dimensional.
We will identify $\CC[\LL]$ with the subring $\CC[\LL]\one \subseteq \TT$.
The ring $\TT$ also carries a natural $\CC[\LL]$-linear action by $\srr$, defined by $\one_\beta \cdot w = \one_{\beta\circ w}$ for $w \in \srr$.
The ring $\TT$ can also be identified with the product ring
\begin{equation}\label{eq:rtymo}
  \TT = \prod_{\beta\in\slr} \CC[\RR],
\end{equation}
with a typical element being of the form
\begin{equation}\label{eq:rtypical}
  \sum_{\beta\in\slr} \one_\beta \, g_\beta(R_1, \ldots, R_n),
\end{equation}
since the rings $\CC[\LL]$ and $\CC[\RR]$ are isomorphic.
However, they are isomorphic in many different ways, and the choice of isomorphism here is important.
In fact, we will choose a different isomorphism for each coordinate, since each $\beta \in \slr$ gives a natural way of identifying the indeterminates of $\CC[\LL]$ and $\CC[\RR]$.
In other words, we impose the identity
\begin{equation}
  \one_\beta \, R_i = \beta(R_i) \, \one_\beta
\end{equation}
for every bijection $\beta$ and every indeterminate $R_i$, so that
\begin{equation}
  f_\beta(L_1, \ldots, L_n) = g_\beta\paren[\big]{\beta(R_1), \ldots, \beta(R_n)}
\end{equation}
if the elements \eqref{eq:ltypical} and \eqref{eq:rtypical} are equal.
In the presentation from \eqref{eq:rtymo}, the ring $\TT$ has a natural structure as a right $\CC[\RR]$-module, and a $\CC[\RR]$-linear action by $\sll$ on the left, defined by $w \cdot \one_\beta = \one_{w\circ\beta}$ for $w \in \sll$.
As with $\CC[\LL]$, we will identify $\CC[\RR]$ with the subring $\one\CC[\RR] \subseteq \TT$.
So far, we have the following structures on the ring $\TT$:
\begin{itemize}[nosep,label=--]
  \item it is a graded-connected commutative $\CC$-algebra;
  \item it has a left action by $\CC[\LL]$;
  \item it has a left action by $\sll$ (this is in fact the \emph{dot action});
  \item it has a right action by $\CC[\RR]$; and
  \item it has a right action by $\srr$.
\end{itemize}
How do all of these structures interact?
\begin{itemize}[nosep,label=--]
  \item Everything in sight is $\CC$-linear and respects the grading;
  \item both left actions \emph{commute} with both right actions;
  \item the action by $\sll$ \emph{distributes} over the action by $\CC[\LL]$, so that $w \cdot (L_i \, \one_\beta) = w(L_i) \, \one_{w\circ\beta}$, making it a twisted $\CC[\LL]$-linear action; and
  \item the action by $\srr$ \emph{distributes} over the action by $\CC[\RR]$, so that $(\one_\beta \, R_i) \cdot w = \one_{\beta\circ w} \, w(R_i)$, making it a twisted $\CC[\RR]$-linear action.
\end{itemize}

\subsection{The moment graph}
Consider the set $\slr$ of bijections from $\RR$ to $\LL$.
We will now define a directed graph structure $B$ with $\slr$ as vertex set, and a spanning subgraph of it $M(h)$ for each Hessenberg function $h$.
An \emph{inversion} of $\beta \in \slr$ is a pair $(R_i, R_j)$ with $i < j$ such that the corresponding pair $(L_{i'}, L_{j'}) = (\beta(R_i), \beta(R_j))$ has $i' > j'$.
For every $i < j$, half of the vertices in $\slr$ have $(R_i, R_j)$ as an inversion, and half of the vertices don't.
The vertices from these two sets can be paired up by using the transposition $(R_i \leftrightarrow R_j) \in \srr$;
we will put a directed edge in $B$ from $\beta$ to $\beta \circ (R_i \leftrightarrow R_j)$ for every $\beta$ which doesn't have $(R_i, R_j)$ as an inversion, and say that this edge is labelled by $(R_i, R_j)$.
Doing this for every $i < j$ gives the directed edge set of $B$.

Given a Hessenberg function $h$, recall that the ordered graph $G(h)$ has an edge between $i$ and $j$ when $i < j \leq h(i)$. We will define the directed graph $M(h)$ as the subgraph of $B$ which contains all of its vertices, and only the directed edges which are labelled by $(R_i, R_j)$ for those pairs $i, j$ with $i < j \leq h(i)$. This is the \emph{moment graph} for the regular semisimple Hessenberg variety $\hess(D, h)$.

Note that the underlying undirected graph of $B$ is essentially the Cayley graph on $\slr$ generated by all transpositions in $\srr$, or equivalently, all transpositions in $\sll$. With the given edge orientations, $B$ has a single \emph{source}, the vertex $\beta_0$ such that $\beta_0(R_i) = L_i$ for all $i$, and a single \emph{sink}, the vertex $\beta_1$ such that $\beta_1(R_i) = L_{n+1-i}$ for all $i$, and it is acyclic.

\subsection{Edge conditions}
Given an element
\begin{equation}
  x = \sum_{\beta\in\slr} \one_\beta \, g_\beta(R_1, \ldots, R_n)
\end{equation}
of $\TT$ and a directed edge $\beta \to \beta'$ with label $(R_i, R_j)$, we will say that $x$ \emph{satisfies the edge condition} for this edge if the difference
\begin{equation}
  g_\beta(R_1, \ldots, R_n) - g_{\beta'}(R_1, \ldots, R_n)
\end{equation}
is divisible by $R_i - R_j$.
The reader with a taste for symmetry should note we could have labelled the edge $(L_{i'}, L_{j'}) = (\beta(R_i), \beta(R_j))$ and asked that
\begin{equation}
  f_\beta(L_1, \ldots, L_n) - f_{\beta'}(L_1, \ldots, L_n)
\end{equation}
be divisible by $L_{i'} - L_{j'}$ for the representation of $x$ as
\begin{equation}
  x = \sum_{\beta\in\slr} f_\beta(L_1, \ldots, L_n) \, \one_\beta
\end{equation}
instead; this would actually be an equivalent condition, even though we are dealing with two different identifications (given by $\beta$ and $\beta'$) between the rings $\CC[\LL]$ and $\CC[\RR]$. However, this reader should also note that there is a fundamental $\LL$--$\RR$ asymmetry in the definition of the moment graph $M(h)$.

Note that the elements of $\TT$ which satisfy a given edge condition form a subring of $\TT$ which contains $\CC[\LL]$ and $\CC[\RR]$.

\subsection{Building elements}
Given a moment graph $M(h)$, we will be interested in the subring $\TT_h$ of elements of $\TT$ which satisfy the edge conditions for all edges of $M(h)$.
Thankfully, there is a simple procedure for constructing these elements:
\begin{algorithm}\label{algorithm}
\hspace{0em} 
\begin{enumerate}[nosep,label={(Step~\arabic*)},leftmargin=*]
  \item
    Start with a prospective element
    \begin{equation}
      x = \sum_{\beta\in\slr} \one_\beta \, g_\beta(R_1, \ldots, R_n)
    \end{equation}
    where all of the polynomials $g_\beta$ are unassigned.
  \item\label{step:loop}
    Pick any vertex $\beta'$ such that for all directed edges $\beta \to \beta'$ in $M(h)$, the polynomial $g_\beta$ is already assigned. (At the first step, this might be the source vertex $\beta_0$ defined by $\beta_0(R_i) = L_i$ for all $i$, since it has no incoming edges.)
  \item\label{step:choose}
    Choose any polynomial which satisfies the edge condition for every edge $\beta \to \beta'$ (thus ignoring any edges $\beta' \to \beta$), and assign it to $g_{\beta'}$.
  \item
    While there are still unassigned polynomials, return to \ref*{step:loop}.
\end{enumerate}
\end{algorithm}
This algorithm relies on a few assumptions:
\begin{itemize}[nosep,label=--]
  \item In \ref*{step:loop}, there is always a suitable vertex to be picked, and every vertex will be picked eventually. This is easy to see, given that $M(h)$ is a directed acyclic graph.
  \item In \ref*{step:choose}, no matter what previous choices have been made, there always exists at least one polynomial which satisfies the incoming edge conditions. This is far from trivial; see \cite{tymoczko05}*{Section 6} and references therein for a proof.
\end{itemize}
Given these assumptions, however, it should be clear that every element of $\TT_h$ can be constructed using this procedure.

\subsection{Flow-up vectors}
For each vertex $\beta \in \slr$, there are elements of $\TT_h$ which will be particularly useful for what follows, called \emph{flow-up vectors} for $\beta$ in $\TT_h$.
They are, in a sense, elements whose `leading coefficient' is at coordinate $\beta$ and as small as possible.
They are constructed using \autoref{algorithm} by making these choices:
\begin{itemize}[nosep,label=--]
  \item If there is no path $\beta \to \cdots \to \beta'$ in $M(h)$, then assign $g_{\beta'} = 0$.
  \item At the vertex $\beta$, assign
    \begin{equation}
      g_{\beta} = \prod (R_i - R_j),
    \end{equation}
    where the product is over all edge labels $(R_i, R_j)$ of incoming edges.
  \item Otherwise, when there is a path $\beta \to \cdots \to \beta'$ in $M(h)$, pick a polynomial for $g_{\beta'}$ which is homogeneous of the same degree as $g_\beta$.
\end{itemize}
These choices are compatible with \autoref{algorithm}, so flow-up vectors always exist.
As noted in Tymoczko's work, picking a flow-up vector for each $\beta \in \slr$ gives a homogeneous basis for $\TT_h$ as a graded $\CC[\LL]$-module, thus showing that it is a free module of rank $n!$.
We note here that the same is true over $\CC[\RR]$, with the same basis.

\subsection{The small rings}
Thus, for every Hessenberg function $h$, we have the following structures on the subring $\TT_h$ of Tymoczko's ring $\TT$:
\begin{itemize}[nosep,label=--]
  \item it is isomorphic to the equivariant cohomology ring $H^*_T(h)$ of the regular semisimple Hessenberg variety $\hess(D, h)$;
  \item it is a graded-connected commutative $\CC$-subalgebra of $\TT$, containing the subrings $\CC[\LL]$ and $\CC[\RR]$;
  \item it is a free graded module of rank $n!$ over $\CC[\LL]$;
  \item it is a free graded module of rank $n!$ over $\CC[\RR]$;
  \item it is stable under the dot action of $\sll$ on the left;
  \item it is not generally stable under the action of $\srr$ on the right.
\end{itemize}
The stability under $\sll$ but not $\srr$ comes from the fact that the action of $\sll$ on the directed graph $B$ sends the subgraph $M(h)$ to itself (at least, when ignoring the direction of edges), but in general the action of $\srr$ does not.

Given two Hessenberg functions $h, h'$ with $h \leq h'$ pointwise, we have the inclusion of subgraphs $M(h) \subseteq M(h')$, so that the reverse inclusion of subrings $\TT_h \supseteq \TT_{h'}$ holds.
At one extreme, when $h(i) = i$ for all $i$, there are no edge conditions imposed, and $\TT_h = \TT$.
At the other extreme, when $h(i) = n$ for all $i$, all possible edge conditions are imposed, and it can be seen with some work that $\TT_h$ is the subring of $\TT$ generated by $\CC[\LL]$ and $\CC[\RR]$, or equivalently, the ring $\CC[\LL] \otimes_\Lambda \CC[\RR]$, where $\Lambda = \CC[\LL] \cap \CC[\RR] \subseteq \TT$ is the ring of symmetric polynomials in either $\LL$ or $\RR$.

\section{A few facts about symmetric functions}\label{sec:sym}
We will need just a few facts about symmetric functions, especially in relation to representations of symmetric groups, which we record here.
For much more detail on the topic, see \cite{macdonald95}*{Part I}.

The ring of symmetric function $\sym$ is a subring of the $\qsym$.
It consists of those quasisymmetric functions for which the coefficient of $M_\alpha$ and $M_{\alpha'}$ are equal whenever the list of positive integers $\alpha'$ can be obtained from the list $\alpha$ by reordering its entries.
In other words, it consists of all $\CC(q)$-linear combinations of the \emph{monomial} symmetric functions
\begin{equation}
  m_\lambda = \sum_{\alpha\sim\lambda} M_\alpha,
\end{equation}
where $\lambda = (\lambda_1 \geq \lambda_2 \geq \cdots \geq \lambda_r)$ is a \emph{partition}, that is, a weakly decreasing list of positive integers; and $\alpha \sim \lambda$ when the sorted rearrangement of $\alpha = (\alpha_1, \alpha_2, \ldots, \alpha_r)$ is $\lambda$.
There are several other combinatorially significant bases of $\sym$, but for our purposes we will only need the basis of \emph{power-sum} symmetric functions $p_\lambda$, defined by
\begin{equation}
  p_{(k)} = m_{(k)} = M_{(k)}
\end{equation}
when $\lambda = (k)$ is a partition with a single part, and multiplicatively by
\begin{equation}
  p_\lambda = p_{(\lambda_1)} \cdot p_{(\lambda_2)} \cdot \cdots \cdot p_{(\lambda_r)}
\end{equation}
when $\lambda = (\lambda_1 \geq \lambda_2 \geq \cdots \geq \lambda_r)$ has many parts.
The comultiplication on the basis of power-sum symmetric functions acts very nicely, since
\newlength{\mywidth}\settowidth{\mywidth}{$p_{(k)}$}
\newcommand{\mydiag}{\makebox[\mywidth][c]{$p_{(k)}$}}
\newcommand{\myoffd}{\makebox[\mywidth][c]{1}}
\begin{equation}\begin{aligned}
  \Delta_r\paren[\big]{p_{(k)}}
    &= \mydiag \otimes \myoffd \otimes \cdots \otimes \myoffd \\
    &\mathrel{+} \myoffd \otimes \mydiag \otimes \cdots \otimes \myoffd \\
    &\vdotswithin{=} \\
    &\mathrel{+} \myoffd \otimes \myoffd \otimes \cdots \otimes \mydiag,
\end{aligned}\end{equation}
where $p_{(k)}$ appears on the main diagonal of this array and ones appear everywhere else.
In particular, $\sym$ is closed under the multiplication and the comultiplication of $\qsym$, so it is a graded-connected Hopf subalgebra.
Also, if follows that given any infinite sequence $c = (c_1, c_2, c_3, \ldots)$ of coefficients in the ground field $\CC(q)$, the map defined by
\begin{equation}\begin{aligned}
  \varphi_c \colon \sym &\to \sym
  p_{(k)} &\mapsto c_k p_{(k)}
\end{aligned}\end{equation}
can be extended uniquely to be $\CC(q)$-linear, multiplicative, and comultiplicative, so that it is a graded Hopf endomorphism.
As examples of a few maps of this type, we have:
\begin{itemize}[nosep,label=--]
  \item the identity map $\id$, for which $p_{(k)} \mapsto p_{(k)}$;
  \item the antipode $S$, for which $p_{(k)} \mapsto -p_{(k)}$;
  \item the Eulerian map $E_t$ for $t \in \CC(q)$, for which $p_{(k)} \mapsto t^k p_{(k)}$; and
  \item the involution $\omega$, for which $p_{(k)} \mapsto (-1)^{k+1} p_{(k)}$.
\end{itemize}
Since $\sym$ is both commutative and cocommutative, it can be shown that the \emph{convolution} product of graded Hopf endomorphisms $\varphi_1, \varphi_2, \ldots, \varphi_r$ of $\sym$, defined by
\begin{equation}
  \varphi_1 * \varphi_2 * \cdots * \varphi_r = \nabla_r \circ (\varphi_1 \otimes \varphi_2 \otimes \cdots \otimes \varphi_r) \Delta_r,
\end{equation}
is also a graded Hopf endomorphism of $\sym$.
For the special case of morphisms defined as above by infinite sequences, we have $\varphi_c * \varphi_d = \varphi_{c+d}$, where $(c+d)$ is the sequence $(c_1+d_1, c_2+d_2, \ldots)$.

Now, consider a $\CC$-linear representation $V$ of the symmetric group $S_n$, that is, a finite-dimensional vector space $V$ over $\CC$ together with a $\CC$-linear action of $S_n$ on its elements.
The \emph{trace} of a permutation $w \in S_n$ on $V$ over $\CC$ is
\begin{equation}
  \tr(w, V, \CC) = \sum_i (\text{coefficient of $e_i$ in $w \cdot e_i$}) \in \CC,
\end{equation}
where the sum is over the elements of $e_i$ of a basis of $V$.
The \emph{Frobenius characteristic} of the action of $S_n$ on $V$ over $\CC$ is the symmetric function
\begin{equation}
  \frob(S_n, V, \CC) = \frac{1}{n!} \sum_{w\in S_n} \tr(w, V, \CC) p_{(\text{cycle type of $w$})} \in \sym,
\end{equation}
where the cycle type of $w$ is the partition $\lambda = (\lambda_1 \geq \lambda_2 \geq \cdots \geq \lambda_\ell)$ which gives the lengths of each cycle of $w$.
Let $\alpha = (\alpha_1, \alpha_2, \ldots, \alpha_r)$ be an $r$-tuple of natural numbers such that $\alpha_1 + \cdots + \alpha_r = n$.
The \emph{Young subgroup} $Y_\alpha \subseteq S_n$ of type $\alpha$ consists of all permutations in $S_n$ which permute the first $\alpha_1$ elements of $[n]$ among themselves, and the next $\alpha_2$ among themselves, and so on.
As a group, it is naturally isomorphic to a cartesian product,
\begin{equation}
  Y_\alpha = S_{\alpha_1} \times S_{\alpha_2} \times \cdots \times S_{\alpha_r}.
\end{equation}
Let $V_1, V_2, \ldots V_r$ be representations of $S_{\alpha_1}, S_{\alpha_2}, \ldots, S_{\alpha_r}$ respectively. Then, their tensor product
\begin{equation}
  V = V_1 \otimes V_2 \otimes \cdots \otimes V_r
\end{equation}
naturally has the structure of a representation of $Y_\alpha$. Let $V'$ be the induced representation of $V$ from $Y_\alpha$ to $S_n$, defined by
\begin{equation}
  V' = \bigotimes_{i=1,\ldots,k} w_i V,
\end{equation}
where $w_1, w_2, \ldots, w_k$ is a full set of coset representatives for $Y_\alpha$ in $S_n$, and each $w_i V$ is an isomorphic copy of $V$.
Then, we have the following relation between the Frobenius characteristics of these representations:
\begin{equation}
  \frob(S_n, V', \CC) = \prod_{i=1,\ldots,r} \frob(S_{\alpha_i}, V_i, \CC),
\end{equation}
where the multiplication is the multiplication as symmetric function.

We have a dual relationship for comultiplication.
Let $V$ be any representation of $S_n$.
For any $r$-tuple $\alpha$ of natural numbers, we can consider the representation of $Y_\alpha$ on $V$, and this breaks up as a tensor product
\begin{equation}
  V|_{Y_\alpha} = V_{\alpha,1} \otimes V_{\alpha,2} \otimes \cdots \otimes V_{\alpha,r},
\end{equation}
where each $V_{\alpha,i}$ is a representation of $S_{\alpha_i}$.
Then, the relationship between Frobenius characteristics for comultiplication is
\begin{equation}
  \Delta_r\paren[\big]{\frob(S_n, V, \CC)}
    = \sum_\alpha \bigotimes_{i=1,\ldots,r} \frob(S_{\alpha_i}, V_{\alpha,i}, \CC),
\end{equation}
where the sum is over all $r$-tuples $\alpha$.

We have a third relationship between representations of $S_n$ and symmetric functions: the Kronecker product.
Given two representations $U, V$ of $S_n$, there is a natural structure of a representation of $S_n \times S_n$ on the tensor product $U \otimes V$ as above, if we act by $S_n$ independently on each factor.
However, there is an equally natural structure of a representation of $S_n$ on $U \otimes V$, where we act by $S_n$ simultaneously on both factors (also known as the diagonal action).
For this representation, we have the relation
\begin{equation}
  \frob(S_n, U \otimes V, \CC) = \frob(S_n, U, \CC) \star \frob(S_n, V, \CC),
\end{equation}
where the \emph{Kronecker product} $\star$ is the bilinear product $\sym \otimes \sym \to \sym$ defined by
\begin{equation}
  p_\lambda \star p_\mu = \begin{cases}
    z(\lambda) p_\lambda &\text{if $\lambda = \mu$}, \\
    0 &\text{otherwise.}
  \end{cases}
\end{equation}
Here, the $z(\lambda)$ is the standard scaling factor for the power-sum symmetric function, which can also be recovered from the fact that the Frobenius characteristic of the trivial one-dimensional representation of $S_n$ is the identity for $\star$.

We also note that for a representation $V$ of $S_n$, the coefficient of $M_{(n)}$ in the symmetric function
\begin{equation}
  \frob(S_n, V, \CC)
\end{equation}
is the dimension of the subspace of $V$ on which $S_n$ acts trivially, and the coefficient of $M_{(n)}$ in the symmetric function
\begin{equation}
  \omega\paren[\big]{\frob(S_n, V, \CC)}
\end{equation}
is the dimension of the subspace of $V$ on which $S_n$ acts as the sign representation, that is, the set of vectors $v \in V$ such that $w \cdot v = v$ for every even permutation $w \in S_n$ and $w \cdot v = -v$ for every odd permutation.

\section{The Frobenius character of the Dot action as a Hopf map}\label{sec:mor}
In this section, we will show that the two $\CC(q)$-linear maps defined by
\begin{align}
  G(h) &\mapsto \frob_q(\sll, \TT_h, \CC[\LL]) \in \sym, \\
  G(h) &\mapsto \frob_q(\sll, \TT_h, \CC[\RR]) \in \sym,
\end{align}
on the basis of the graded-connected Hopf algebra $\dyck$ respect the multiplication and comultiplication maps and the grading of $\dyck$ and $\sym$, so that they are in fact maps of graded Hopf algebras.
This mainly involves giving decompositions of $\TT_h$ as a $\CC$-linear vector space which are compatible with the actions of $\CC[\LL]$, $\CC[\RR]$ and $\sll$.
We will then identify the multiplicative characters which uniquely determines these maps, as per \autoref{sec:qsym}, to show
\begin{align}
  \omega\paren[\big]{\frob_q(\sll, \TT_h, \CC[\LL])} &= \Psi_0\paren[\big]{G(h)}, \\
  \omega\paren[\big]{\frob_q(\sll, \TT_h, \CC[\RR])} &= \Psi_q\paren[\big]{G(h)},
\end{align}
where $\omega \colon \sym \to \sym$ is the usual involution on symmetric functions.

\subsection{Respecting multiplication}
Let $h \colon [n] \to [n]$ be a Hessenberg function, let $n = n_1 + n_2 + \cdots + n_r$, and suppose that we have an isomorphism of ordered graphs
\begin{equation}
  G(h_1) \olex G(h_2) \olex \cdots \olex G(h_r) = G(h),
\end{equation}
where each $h_i \colon [n_i] \to [n_i]$ is a Hessenberg function, so that the $r$-fold multiplication of the ordered graphs $G(h_i)$ in $\dyck$ is
\begin{equation}
  \nabla_r\paren[\big]{G(h_1) \otimes G(h_2) \otimes \cdots \otimes G(h_r)} = G(h).
\end{equation}
Then, to show that multiplication is preserved, we would like to show that
\begin{equation}\label{eq:frob-prod}
  \prod_{i=1,\ldots,r} \frob_q(\sll[i], \TT_{h_i}, \CC[\LL_i]) = \frob_q(\sll, \TT_h, \CC[\LL]),
\end{equation}
and similarly for $\CC[\RR]$.
Here, we have introduced different set of indeterminates $\LL_i = (L_{i,1}, \ldots, L_{i,n_i})$ for each $i$ on the left-hand side.
These indeterminates should be understood as identified with the set of indeterminates $\LL = (L_1, \ldots, L_n)$ on the right-hand side under the lexicographic identification of ordered sets
\begin{equation}
  [n_1] \olex [n_2] \olex \cdots \olex [n_r] = [n].
\end{equation}
Symmetric remarks hold for the indeterminates $\RR$.
We can prove \eqref{eq:frob-prod} and its twin statement for $\CC[\RR]$ at the level of $\sll$-representations, as follows.
Consider the $\CC$-algebra
\begin{equation}
  \TT_{h_1} \otimes \TT_{h_2} \otimes \cdots \otimes \TT_{h_r},
\end{equation}
which is naturally a module over $\CC[\LL]$ and $\CC[\RR]$, equipped with an action of the Young subgroup
\begin{equation}
  Y_\LL = \sll[1] \times \sll[2] \times \cdots \times \sll[r] \subseteq \sll.
\end{equation}
Then, we need to show that $\TT_h$ is the induced representation from the Young subgroup $Y_\LL$ to $\sll$.
This has already been shown by Teff \cite{teff13}, but we include a sketch of the argument here so that the reader can compare and contrast this with the proof that comultiplication is respected.
\begin{lemma}\label{thm:mult-works}
  With the notation and assumptions of this subsection, $\TT_h$ is isomorphic to the induced representation of $\TT_{h_1} \otimes \TT_{h_2} \otimes \cdots \otimes \TT_{h_r}$ from the Young subgroup $Y = \sll[1] \times \sll[2] \times \cdots \times \sll[r]$ to $\sll$.
\end{lemma}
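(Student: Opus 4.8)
The plan is to exhibit an explicit isomorphism of $\sll$-modules between $\TT_h$ and the induced module $\mathrm{Ind}_{Y_\LL}^{\sll}\paren[\big]{\TT_{h_1} \otimes \cdots \otimes \TT_{h_r}}$, using the combinatorial description of both sides in terms of the moment graph. First I would set up notation for the moment graphs: the vertex set $\slr$ of bijections $\RR \to \LL$ decomposes according to the lexicographic identification $[n] = [n_1] \olex \cdots \olex [n_r]$, so that a bijection $\beta$ restricts on each block of $\RR$ to some partial assignment. The key combinatorial observation is that an edge of $M(h)$ is labelled by a transposition $(R_i \leftrightarrow R_j)$ with $i < j \leq h(i)$, and since $G(h) = G(h_1) \olex \cdots \olex G(h_r)$ has no edges between distinct blocks, every such $i,j$ lie in the \emph{same} block; hence $M(h)$, as an undirected graph, is a disjoint product of the block moment graphs $M(h_\ell)$ across the coset directions.

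Concretely, I would fix a set of coset representatives $w_1, \ldots, w_k$ for $Y_\LL$ in $\sll$ and, for each coset $w_a Y_\LL$, identify the set of bijections $\beta \in \slr$ whose ``block pattern'' (which indeterminates of $\LL$ land in which block of $\RR$) is prescribed by $w_a$. On this subset, the edge conditions of $M(h)$ decouple: an element of $\TT$ supported there and satisfying all edge conditions of $M(h)$ is exactly a tensor $\one_{w_a} \otimes (x_1 \otimes \cdots \otimes x_r)$ with $x_\ell \in \TT_{h_\ell}$. This gives a $\CC$-linear (indeed $\CC[\LL]$- and $\CC[\RR]$-linear) direct sum decomposition
\begin{equation}
  \TT_h \;\cong\; \bigoplus_{a=1}^{k} w_a\paren[\big]{\TT_{h_1} \otimes \cdots \otimes \TT_{h_r}}.
\end{equation}
Then I would check that the dot action of $\sll$ permutes these summands the same way it permutes the cosets $w_a Y_\LL$, and that the stabilizer $Y_\LL$ acts on the summand indexed by the trivial coset as the tensor-product action $\sll[1] \times \cdots \times \sll[r]$ on $\TT_{h_1} \otimes \cdots \otimes \TT_{h_r}$. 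Together these are precisely the defining properties of the induced representation, giving the claimed isomorphism. The statement then follows, and combined with the multiplicativity of $\frob$ under induction recalled in \autoref{sec:sym} it yields \eqref{eq:frob-prod} and its $\CC[\RR]$-analogue.

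The main obstacle I expect is the careful bookkeeping in the middle step: verifying that the edge conditions of $M(h)$ genuinely \emph{decouple} across blocks once we restrict to a single coset, and that building elements block-by-block via \autoref{algorithm} matches up with building them for $M(h)$ all at once. One has to be a little careful that the divisibility conditions $R_i - R_j \mid g_\beta - g_{\beta'}$ only ever involve $R_i, R_j$ from the same block (true because $h(i) < $ the start of the next block whenever $i$ is in a given block, by the $\olex$-decomposition of $G(h)$), and that there are no ``cross-block'' edges of $M(h)$ that would couple the coset index to the internal structure. The rest — linearity over $\CC[\LL]$ and $\CC[\RR]$, compatibility of gradings, and the reduction to the Frobenius identity — is routine given the structures already catalogued for $\TT_h$ in \autoref{sec:tymo}.
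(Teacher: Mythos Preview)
Your proposal is correct and follows essentially the same route as the paper: both arguments observe that the edges of $M(h)$ only involve labels $(R_i,R_j)$ with $i,j$ in the same $\RR$-block, so $M(h)$ breaks into pieces indexed by ``block patterns'' (equivalently, $Y_\RR$-orbits, equivalently, left $Y_\LL$-cosets), each piece is a cartesian product $M(h_1)\times\cdots\times M(h_r)$, and the resulting direct-sum decomposition of $\TT_h$ is exactly the induced representation. The paper certifies the isomorphism $\TT_h|_\OO\cong\TT_{h_1}\otimes\cdots\otimes\TT_{h_r}$ by noting that tuples of flow-up vectors map to flow-up vectors, which is the concrete version of your ``building elements block-by-block via \autoref{algorithm}'' step.
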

\begin{proof}
  Note that in this case, the moment graph $M(h)$ is disconnected, and each of the orbits of the vertices under the action of the group
  \begin{equation}
    Y_\RR = \srr[1] \times \srr[2] \times \cdots \times \srr[r] \subseteq \srr
  \end{equation}
  is a union of connected components.
  This is because $M(h)$ cannot contain any directed edge labelled by indeterminates from $\RR_i$ and $\RR_j$ with $i \neq j$.
  Consider one of these orbits, say the orbit $\OO$ containing the source vertex $\beta_0 \in \slr$, defined by $\beta_0(R_i) = L_i$ for $i = 1, \ldots, n$.
  When restricted to $\OO$, the directed graph $M(h)$ is isomorphic to the cartesian product
  \begin{equation}
    M(h_1) \times M(h_2) \times \cdots \times M(h_r).
  \end{equation}
  Since the elements of $\TT_{h_1}, \ldots \TT_{h_r}$ and $\TT_h$ are defined in terms of satisfying edge conditions, it follows that the natural $\CC$-linear map
  \begin{equation}
    \TT_{h_1} \otimes \TT_{h_2} \otimes \cdots \otimes \TT_{h_r} \longrightarrow \TT_h|_{\OO},
  \end{equation}
  produces elements which satisfy all edge conditions, where $\TT_h|_{\OO} \subseteq \TT_h$ is the subspace of elements whose coordinate polynomials outside of $\OO$ are zero.
  Furthermore, this map takes tuples of flow-up vectors to flow-up vectors, and it follows that it is an isomorphism.

  Now, note that $\OO$ is also the orbit of $\beta_0$ under the Young subgroup $Y_\LL$, so the subspace $\TT_h|_{\OO}$ is stable under the action of $Y_\LL$.
  If $w_1, w_2, \ldots, w_k$ is a complete set of coset representatives for $Y_\LL$ in $\sll$, then the orbits under the action of $Y_\RR$ are $w_1 \OO, w_2 \OO, \ldots, w_k \OO$, and we have a decomposition
  \begin{equation}
    \TT_h = \bigoplus_{i=1,\ldots,k} w_i \cdot \TT_h|_{\OO}.
  \end{equation}
  This is exactly the construction of the induced representation.
\end{proof}

\subsection{Respecting comultiplication}
Let $h \colon [n] \to [n]$ be a Hessenberg function and let $r \geq 0$. Then, on the side of the Hopf algebra $\dyck$, the definition of the $r$-fold comultiplication is
\begin{equation}
  \Delta_r\paren[\big]{G(h)} = \sum_{\substack{\kappa \colon [n] \to [r] \\ \text{arbitrary}}} q^{(\text{\# ascents of $\kappa$ on $G(h)$})} \, G(h)|_\kappa.
\end{equation}
On the side of the Hopf algebra $\sym$, for the representation of $\sll$ on $\TT_h$, we have the equation
\begin{equation}\label{eq:froby}
  \Delta_r\paren[\big]{\frob_q(\sll, \TT_h, \CC[\LL])} = \sum_\alpha
    \frob_q(Y_{\LL,\alpha}, \TT_h, \CC[\LL])
\end{equation}
and similarly for $\CC[\RR]$, where the sum is over all $r$-tuples $\alpha = (\alpha_1, \ldots, \alpha_r)$ of natural numbers such that $\alpha_1 + \cdots + \alpha_r = n$, and $Y_{\LL, \alpha}$ is the Young subgroup consisting of permutations in $\sll$ which swap the first $\alpha_1$ indeterminates among themselves, the next $\alpha_2$ indeterminates among themselves, and so on.
We can get these two equations to line up better with each other by grouping the colourings $\kappa$ according to their type; we will say that $\kappa$ has type $\alpha$ if the number of times colour $i$ is used is $\alpha_i$ for all $i = 1, \ldots, r$, and then we can write
\begin{equation}\label{eq:ascenty}
  \Delta_r\paren[\big]{G(h)} = \sum_\alpha \sum_{\substack{\kappa \colon [n] \to [r] \\ \text{of type $\alpha$}}} q^{(\text{\# ascents of $\kappa$ on $G(h)$})} \, G(h)|_\kappa.
\end{equation}
To show that comultiplication is preserved by the graded Frobenius characteristic, we will show that the terms with the same $\alpha$ in the sums in \eqref{eq:froby} and \eqref{eq:ascenty} match up.
On the level of representations, we will give a sequence of $\CC$-linear projections
\begin{equation}
  \TT_h \longrightarrow \cdots \longrightarrow 0
\end{equation}
which preserve the actions of $\CC[\LL]$, $\CC[\RR]$ and $Y_{\LL,\alpha}$, where there is a projection step for each colouring $\kappa \colon [n] \to [r]$ of type $\alpha$, and the kernel of this projection is isomorphic to the space
\begin{equation}
  \TT_{h_1} \otimes \TT_{h_2} \otimes \cdots \otimes \TT_{h_r}
\end{equation}
with its degree shifted up by the number of ascents of $\kappa$ on $G(h)$, where
\begin{equation}
  G(h)|_\kappa = \paren[\big]{G(h_1),\, G(h_2),\, \ldots,\, G(h_r)}.
\end{equation}
\begin{lemma}
  With the notation and assumptions of this subsection, there exists a suitable sequence of projections.
\end{lemma}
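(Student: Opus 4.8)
The plan is to realise the required projections as the successive quotient maps of an increasing filtration $0 = W_0 \subseteq W_1 \subseteq \cdots \subseteq W_N = \TT_h$ of $\TT_h$ by sub-$(\CC[\LL],\CC[\RR],Y_{\LL,\alpha})$-modules, one step for each colouring of type $\alpha$. Fix $\alpha = (\alpha_1,\ldots,\alpha_r)$; it cuts $\LL$ into consecutive blocks of sizes $\alpha_1,\ldots,\alpha_r$, and $Y_{\LL,\alpha}$ is the group of permutations of $\LL$ preserving this partition. Pulling the partition of $\LL$ back along a vertex $\beta\in\slr$ gives a partition of $\RR$, equivalently of the vertex set $[n]$ of $G(h)$, into classes of sizes $\alpha_1,\ldots,\alpha_r$, i.e.\ a colouring $\kappa_\beta\colon[n]\to[r]$ of type $\alpha$; one has $\kappa_\beta = \kappa_{\beta'}$ iff $\beta' = w\circ\beta$ with $w\in Y_{\LL,\alpha}$, so $\slr = \bigsqcup_\kappa\mathcal{O}_\kappa$, the disjoint union over type-$\alpha$ colourings of the $Y_{\LL,\alpha}$-orbits $\mathcal{O}_\kappa = \{\beta : \kappa_\beta = \kappa\}$, with $Y_{\LL,\alpha}$ acting simply transitively on each orbit. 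I would then record three facts about the moment graph: (i) along a directed edge $\beta\to\beta'$ of $M(h)$ labelled $(R_i,R_j)$, either $R_i,R_j$ lie in one colour class of $\kappa_\beta$ and $\kappa_{\beta'} = \kappa_\beta$ (the edge is \emph{internal}), or $\kappa_{\beta'}$ is obtained from $\kappa_\beta$ by swapping the colours of the vertices $i$ and $j$, turning the ascent $\{i,j\}$ of $\kappa_\beta$ into a descent of $\kappa_{\beta'}$ and dropping the ascent count by exactly one; hence the number of ascents of $\kappa_\beta$ on $G(h)$ is constant along directed paths within an orbit and strictly decreasing along any other directed path; (ii) $M(h)$ restricted to $\mathcal{O}_\kappa$ is the cartesian product $M(h_1)\times\cdots\times M(h_r)$ of the moment graphs of the factors of $G(h)|_\kappa = (G(h_1),\ldots,G(h_r))$; (iii) the outgoing edges of a vertex $\beta\in\mathcal{O}_\kappa$ that leave the orbit are in bijection with, and labelled by, the ascent edges of $\kappa$ on $G(h)$.

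List the type-$\alpha$ colourings as $\kappa^{(1)},\ldots,\kappa^{(N)}$ with the ascent count of $\kappa^{(m)}$ on $G(h)$ weakly decreasing in $m$, set $T_m = \mathcal{O}_{\kappa^{(1)}}\cup\cdots\cup\mathcal{O}_{\kappa^{(m)}}$, and let $W_m\subseteq\TT_h$ be the elements supported on $T_m$. By (i), $T_m$ is downward closed in the acyclic graph $M(h)$, so $W_m$ is a subspace of $\TT_h$; it is stable under $\CC[\LL]$ and $\CC[\RR]$ since multiplication preserves supports, and under $Y_{\LL,\alpha}$ since $T_m$ is a union of orbits. Then $Q_j := \TT_h/W_{N-j}$ yields a chain $\TT_h = Q_0\to Q_1\to\cdots\to Q_N = 0$ of $(\CC[\LL],\CC[\RR],Y_{\LL,\alpha})$-linear surjections, the step $Q_{j-1}\to Q_j$ corresponding to $\kappa^{(j)}$ and having kernel $W_j/W_{j-1}$; it remains to identify the kernels.

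Restriction to the coordinates in $\mathcal{O}_{\kappa^{(m)}}$ is $(\CC[\LL],\CC[\RR],Y_{\LL,\alpha})$-linear with kernel $W_{m-1}$, so it embeds $W_m/W_{m-1}$ into $\prod_{\beta\in\mathcal{O}_{\kappa^{(m)}}}\CC[\RR]$. For $x\in W_m$: the internal edge conditions on $\mathcal{O}_{\kappa^{(m)}}$ say, via (ii) and $\CC[\RR] = \CC[\RR_{V_1}]\otimes\cdots\otimes\CC[\RR_{V_r}]$ along the colour classes $V_c$ of $\kappa^{(m)}$, exactly that the restriction lies in $\TT_{h_1}\otimes\cdots\otimes\TT_{h_r}$; and the conditions for the outgoing edges of each $\beta\in\mathcal{O}_{\kappa^{(m)}}$ leaving $T_m$ — whose far endpoints are already $0$ — say, via (iii), that $x_\beta$ is divisible by $R_i-R_j$ for every ascent edge $\{i,j\}$ of $\kappa^{(m)}$, hence by their product $P_m$, of degree equal to the ascent count of $\kappa^{(m)}$. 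As the factors of $P_m$ are coprime to every linear form labelling an internal edge, multiplication by $P_m$ identifies $\TT_{h_1}\otimes\cdots\otimes\TT_{h_r}$ — with grading raised by $\deg P_m$ — with the submodule $P_m\cdot(\TT_{h_1}\otimes\cdots\otimes\TT_{h_r})$ of $\prod_{\mathcal{O}_{\kappa^{(m)}}}\CC[\RR]$, into which $W_m/W_{m-1}$ embeds.

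The only step requiring more than bookkeeping is that this embedding is onto, which I would get from explicit elements. For $\beta\in\mathcal{O}_{\kappa^{(m)}}$ let $\widehat v_\beta\in\TT_h$ be the \emph{flow-down vector} for $\beta$: the homogeneous element dual to a flow-up vector, supported on the vertices from which $\beta$ is reachable in $M(h)$, with leading coordinate $\prod(R_i-R_j)$ over the labels of the edges out of $\beta$. (Flow-down vectors exist and, one per vertex, form a homogeneous $\CC[\RR]$-basis — and $\CC[\LL]$-basis, in the same degrees — of $\TT_h$, by Tymoczko's results applied to the reversed moment graph, which is again the moment graph of a regular semisimple Hessenberg variety; the genuinely nontrivial input is the same as for flow-up vectors, namely that the building procedure never gets stuck.) By (i), every vertex from which $\beta$ is reachable has at least as many ascents as $\kappa^{(m)}$, and if it has the same number the connecting path is internal, so it lies in $\mathcal{O}_{\kappa^{(m)}}$; in either case it lies in $T_m$, so $\widehat v_\beta\in W_m$. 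By (iii), the edges out of a vertex of $\mathcal{O}_{\kappa^{(m)}}$ leaving the orbit are its ascent edges and all point outside $T_m$, forcing every coordinate of $\widehat v_\beta$ in $\mathcal{O}_{\kappa^{(m)}}$ to be divisible by $P_m$; dividing by $P_m$ turns $\widehat v_\beta|_{\mathcal{O}_{\kappa^{(m)}}}$, by (ii), into a flow-down vector for $\beta$ of $\TT_{h_1}\otimes\cdots\otimes\TT_{h_r}$. Since those span the tensor product over $\CC[\RR]$, the restrictions of the $\widehat v_\beta$ span $P_m\cdot(\TT_{h_1}\otimes\cdots\otimes\TT_{h_r})$, the embedding is an isomorphism, and $W_m/W_{m-1}\cong\TT_{h_1}\otimes\cdots\otimes\TT_{h_r}$ with grading shifted up by the ascent count of $\kappa^{(m)}$ on $G(h)$, compatibly with $\CC[\LL]$, $\CC[\RR]$ and $Y_{\LL,\alpha}$ (the $\CC[\LL]$ statement because the flow-up and flow-down vectors are bases over both ground rings in the same degrees). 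This assembles into the required projections; I expect the surjectivity — and with it the appeal to Tymoczko's structural theorem for the reversed moment graph — to be where the real work lies.
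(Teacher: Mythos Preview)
Your overall architecture matches the paper's: filter $\TT_h$ by unions of $Y_{\LL,\alpha}$-orbits in $\slr$, then identify each subquotient with the tensor product $\TT_{h_1}\otimes\cdots\otimes\TT_{h_r}$ shifted by the ascent count. Your facts (ii) and (iii) are correct, and your use of flow-down vectors (dual to the paper's flow-up vectors) for surjectivity is a clean device.

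The gap is fact (i). It is \emph{false} that the ascent count on $G(h)$ drops by exactly one---or even weakly decreases---along every external directed edge of $M(h)$. Take $n=4$, $h=(3,3,4,4)$ (so $E(G(h))=\{\{1,2\},\{1,3\},\{2,3\},\{3,4\}\}$), $\alpha=(1,1,1,1)$. The vertex $\beta$ with $\kappa_\beta=(2,3,1,4)$ has the two ascents $\{1,2\}$ and $\{3,4\}$; the outgoing edge labelled $(R_3,R_4)$ lands at $\kappa_{\beta'}=(2,3,4,1)$, which has the \emph{three} ascents $\{1,2\},\{1,3\},\{2,3\}$. So the ascent count went up. Consequently your ascent-ordered sets $T_m$ are not compatible with the edge directions: at the step for $\kappa'=(2,3,4,1)$, the incoming edge from $\OO_{(2,3,1,4)}$ comes from \emph{outside} $T_{m'}$, forcing an extra factor $(R_3-R_4)$ on top of your $P_{m'}=(R_1-R_2)(R_1-R_3)(R_2-R_3)$, so $W_{m'}/W_{m'-1}$ sits in degree $4$, not $3$. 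The subquotients are genuinely wrong, not just your argument for them.

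The fix is immediate and is what the paper does: order the orbits by any linear extension of the quotient DAG $M(h)/Y_{\LL,\alpha}$, not by ascent count. (That the quotient is acyclic follows, for instance, from the fact that the word-inversion number $\#\{a<b:\kappa(a)>\kappa(b)\}$ strictly increases along every external directed edge---this is the standard ``swapping an ascending pair increases inversions by an odd number'' argument.) With a topological order, every outgoing external edge from $\OO_{\kappa^{(m)}}$ leaves $T_m$, your (iii) then forces exactly the ascent product $P_m$, and your flow-down vectors for $\beta\in\OO_{\kappa^{(m)}}$ are supported on predecessors, hence in $T_m$, so the surjectivity argument goes through unchanged.
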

\begin{proof}
  In contrast with the proof of \autoref{thm:mult-works}, we will consider orbits of the group $Y_{\LL,\alpha}$ on the vertices of the moment graph $M(h)$, rather than orbits of $Y_\RR$.
  We will associate to each of these orbits a specific colouring $\kappa$ of type $\alpha$.
  The type $\alpha = (\alpha_1, \ldots, \alpha_r)$ specifies a grouping of the indeterminates $\LL = (L_1, \ldots, L_n)$ into $r$ groups $\LL_i$, where $\LL_1$ consists of the first $\alpha_1$ indeterminates, $\LL_2$ the next $\alpha_2$ indeterminates, and so on.
  Given a bijection $\beta \in \slr$, there is a corresponding grouping of the indeterminates $\RR$ into $\RR_{i,\beta} = \beta^{-1}(\LL_i)$ for $i = 1, \ldots, r$.
  In fact, the orbit $\OO_\beta$ of $\beta$ under the action of $Y_{\LL,\alpha}$ consists of all $\beta' \in \slr$ with the same grouping of indeterminates $\RR$.
  The colouring $\kappa$ associated to this orbit is the one which assigns colour $i \in [r]$ to $j \in [n]$ if $R_j \in \RR_{i,\beta}$.
  By considering the labels $(R_i, R_j)$ of the directed edges of the moment graph $M(h)$, it can be checked that:
  \begin{itemize}[nosep,label=--]
    \item
      if $\set{i, j}$ is an ascent of the colouring $\kappa$ on $G(h)$, then every vertex of $\OO_\beta$ has an incoming edge labelled $(R_i, R_j)$;
    \item
      if $\set{i, j}$ is an descent of the colouring $\kappa$ on $G(h)$, then every vertex of $\OO_\beta$ has an outgoing edge labelled $(R_i, R_j)$; and
    \item
      if $\set{i, j}$ is a monochromatic edge for the colouring $\kappa$ on $G(h)$, then the vertices of $\OO_\beta$ are paired up by edges labelled $(R_i, R_j)$.
  \end{itemize}
  Thus, the induced subgraph of $M(h)$ on the orbit $\OO_\beta$ is a cartesian product of the form
  \begin{equation}
    M(h_1) \times M(h_2) \times \cdots \times M(h_r),
  \end{equation}
  where the restriction of $G(h)$ to $\kappa$ is
  \begin{equation}
    G(h)|_\kappa = \paren[\big]{G(h_1),\, G(h_2),\, \ldots,\, G(h_r)}.
  \end{equation}
  Also, if there is a directed edge in $M(h)$ from $\beta$ to $\beta'$, then every vertex of $\OO_\beta$ has a directed edge to a vertex of $\OO_{\beta'}$ and vice versa.
  Thus, the quotient graph $M(h) / Y_{\LL,\alpha}$ is a directed acyclic graph.
  Let
  \begin{equation}
    \OO_{\beta_1}, \OO_{\beta_2}, \ldots, \OO_{\beta_k}
  \end{equation}
  be a list of the orbits such that all directed edges between the orbits go from a later orbit to an earlier one in the list.
  Then, the list
  \begin{equation}\begin{aligned}
    I_1 &= \TT_h|_{\OO_{\beta_1}} \\
    I_2 &= \TT_h|_{\OO_{\beta_1} \cup \OO_{\beta_2}} \\
    &\vdotswithin{=} \\
    I_k &= \TT_h|_{\OO_{\beta_1} \cup \cdots \cup \OO_{\beta_k}}
  \end{aligned}\end{equation}
  of subspaces of $\TT_h$ is actually a list of nested ideals, where as before $\TT_h|_V$ is the space of elements of $\TT_h$ whose coordinate polynomials are zero outside of the vertex set $V$.
  The corresponding sequence of projections
  \begin{equation}
    \TT_h
    \longrightarrow \TT_h/I_1
    \longrightarrow \TT_h/I_2
    \longrightarrow \cdots
    \longrightarrow \TT_h/I_k = 0
  \end{equation}
  amounts to first modding out by the coordinates in the orbit $\OO_{\beta_1}$, then by the coordinates in $\OO_{\beta_2}$, and so on until all coordinates have been modded out by.
  Clearly, this is compatible with the actions by $\CC[\LL]$, $\CC[\RR]$ and $Y_{\LL,\alpha} \subseteq \sll$.
  It remains to show that the kernel $K_i$ of the $i$th step is as claimed.

  Let $\OO$ be the $i$th orbit, let $\kappa$ be the corresponding colouring, let
  \begin{equation}
    G(h)|_\kappa = \paren[\big]{G(h_1),\, G(h_2),\, \ldots,\, G(h_r)}
  \end{equation}
  be the decomposition of the ordered graph $G(h)$ according to $\kappa$, let $M(h)|_{\OO}$ be the induced subgraph of the moment graph $M(h)$ to the orbit $\OO$, and let
  \begin{equation}
    M(h)|_{\OO} = M(h_1) \times M(h_2) \times \cdots \times M(h_r)
  \end{equation}
  be the associated decomposition as a cartesian product of directed graphs.
  Then, every vertex in $\OO$ is associated to an $r$-tuple of vertices in $M(h_1) \times \cdots \times M(h_r)$.
  An element of
  \begin{equation}
    \TT_{h_1} \otimes \TT_{h_2} \otimes \cdots \otimes \TT_{h_r}
  \end{equation}
  has a polynomial in
  \begin{equation}
    \CC[\RR_1] \otimes \CC[\RR_2] \otimes \cdots \otimes \CC[\RR_r] = \CC[\RR]
  \end{equation}
  for every $r$-tuple of vertices in $M(h_1) \times \cdots \times M(h_r)$, so there is a natural candidate for a $\CC$-linear map
  \begin{equation}
    \TT_{h_1} \otimes \TT_{h_2} \otimes \cdots \otimes \TT_{h_r} \longrightarrow K_i
  \end{equation}
  which simply translates a tuple of vertices in $M(h_1) \times \cdots \times M(h_r)$ into the corresponding vertex of $\OO$.
  The resulting elements do satisfy all the edge conditions for the subgraph $M(h)|_{\OO}$, but this is not enough; to be a proper element of $K_i$, it should also satisfy all incoming edge conditions $\beta \to \beta'$ where $\beta'$ is in the orbit $\OO$ but $\beta$ is not.
  This can be fixed by multiplying every coordinate by
  \begin{equation}
    \prod_{\text{ascents $\set{i, j}$}} (R_i - R_j)
  \end{equation}
  where the product is over all ascents $\set{i, j}$ of the colouring $\kappa$ on $G(h)$; as noted above, every vertex of the orbit $\OO$ has one incoming edge from outside of the orbit for each ascent.
  After this modification, one can check that a tuple of flow-up vectors gets mapped to a flow-up vector, so the map is an isomorphism.
  The modification also introduces a degree shift by the number of ascents of $\kappa$ on $G(h)$, as required.
\end{proof}

\subsection{Computing one of the characters}
Given the results of this section so far, we can conclude that the $\CC(q)$-linear maps defined by
\begin{align}
  G(h) &\mapsto \frob_q(\sll, \TT_h, \CC[\LL]) \in \sym, \\
  G(h) &\mapsto \frob_q(\sll, \TT_h, \CC[\RR]) \in \sym,
\end{align}
respect the multiplication, the comultiplication and the grading of $\dyck$ and $\sym$, so they are maps of graded Hopf algebras.
Each one can be uniquely identified by the values of the corresponding multiplicative characters, which can be obtained by post-composing the maps by the canonical character $\zq \colon \qsym \to \CC(q)$.
Since the $\sll$-representation $\TT_h$ is a twisted representation over $\CC[\LL]$ but a usual representation over $\CC[\RR]$, it will be easier to compute the multiplicative character in the latter case.
Indeed, in terms of the $\CC[\RR]$-linear representation of $\sll$ on $\TT_h$, the value
\begin{equation}
  \zq\paren[\big]{\frob_q(\sll, \TT_h, \CC[\RR])}
\end{equation}
depends only on the subspace of $\TT_h$ on which $\sll$ acts trivially, and the value
\begin{equation}
  \zq\paren[\big]{\omega\paren[\big]{\frob_q(\sll, \TT_h, \CC[\RR])}}
\end{equation}
depends only on the subspace of $\TT_h$ on which $\sll$ acts according to the sign representation.
\begin{lemma}
  The $\CC[\RR]$-linear subspace of $\TT_h$ on which $\sll$ acts according to the sign representation consists of all $\CC[\RR]$ multiples of the element
  \begin{equation}
    \sum_{\beta\in\slr} (-1)^{(\text{\# inversions of $\beta$})} \one_\beta \, \prod_{\text{edges $\set{i, j}$}} (R_i - R_j),
  \end{equation}
  where the product is over all edges $\set{i, j}$ of the ordered graph $G(h)$.
  Thus,
  \begin{equation}
    \zq\paren[\big]{\omega\paren[\big]{\frob_q(\sll, \TT_h, \CC[\RR])}}
      = q^{(\text{\# edges of $G(h)$})}
      = \zeta_q\paren[\big]{G(h)},
  \end{equation}
  so that for every Hessenberg function $h$, we have
  \begin{equation}
    \omega\paren[\big]{\frob_q(\sll, \TT_h, \CC[\RR])} = \Psi_q\paren[\big]{G(h)}.
  \end{equation}
\end{lemma}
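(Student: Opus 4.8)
The plan is to carry out the computation in the right presentation $\TT = \prod_{\beta\in\slr}\CC[\RR]$ of Tymoczko's ring, where the dot action of $\sll$ is genuinely $\CC[\RR]$-linear and simply permutes the coordinates, $w\cdot\one_\beta = \one_{w\circ\beta}$. Since $\sll$ acts freely and transitively on $\slr$, the big ring $\TT$ is, as a graded $\CC[\RR]$-module with $\sll$-action, the regular representation of $\sll$ with scalars extended to $\CC[\RR]$, concentrated in degree zero. First I would locate the sign-isotypic $\CC[\RR]$-submodule of $\TT$ itself: writing a general element as $\sum_\beta \one_\beta\, g_\beta$ with $g_\beta\in\CC[\RR]$, the requirement $w\cdot x = (-1)^{(\text{\# inversions of $w$})}x$ is equivalent to $g_{w\circ\beta} = (-1)^{(\text{\# inversions of $w$})}g_\beta$ for all $w,\beta$, and evaluating at the source $\beta_0$ (given by $\beta_0(R_i)=L_i$) shows this submodule is free of rank one, spanned by the anti-symmetrizer $\sum_\beta (-1)^{(\text{\# inversions of $\beta$})}\one_\beta$ (using that the number of inversions of $w\circ\beta_0$ equals that of $w$ as a permutation of the $\LL$).

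Next I would intersect with the subring $\TT_h$. For $p\in\CC[\RR]$, the element $p\cdot\sum_\beta(-1)^{(\text{\# inversions of $\beta$})}\one_\beta$ lies in $\TT_h$ exactly when it satisfies the edge condition for every directed edge $\beta\to\beta'=\beta\circ(R_i \leftrightarrow R_j)$ of $M(h)$; since $\beta$ and $\beta'$ have opposite inversion parities, this says $\pm 2\,p$ is divisible by $R_i-R_j$, i.e.\ (characteristic zero) that $R_i-R_j$ divides $p$. The labels of $M(h)$ are precisely the pairs $(R_i,R_j)$ with $\set{i,j}$ an edge of $G(h)$, and for distinct edges the forms $R_i-R_j$ are pairwise non-associate irreducibles, so $p$ must be divisible by their product and conversely any such multiple works. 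This yields the claimed description: the sign-isotypic $\CC[\RR]$-submodule of $\TT_h$ is the set of $\CC[\RR]$-multiples of the element displayed in the statement (which visibly satisfies every edge condition, hence does lie in $\TT_h$).

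For the character computation I would first note that $\frob_q(\sll,\TT_h,\CC[\RR])$ is simply the graded Frobenius characteristic of the ordinary $\sll$-representation $\overline{\TT_h}=\TT_h\otimes_{\CC[\RR]}\CC$, graded by $\RR$-degree: a homogeneous $\CC[\RR]$-basis of $\TT_h$ (say a family of flow-up vectors) descends to a homogeneous basis of $\overline{\TT_h}$, and, just as in the bulleted remarks of \autoref{sec:hess} for the $\CC[\LL]$-tower, homogeneity forces the diagonal matrix entries of each $w$ to be scalars, so the graded traces agree degree by degree. By the last fact recalled in \autoref{sec:sym}, applied in each degree, $\zq\paren[\big]{\omega\paren[\big]{\frob_q(\sll,\TT_h,\CC[\RR])}}$ is the Hilbert series in $q$ of the sign-isotypic subspace of $\overline{\TT_h}$. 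Because $\CC$ has characteristic zero and $\sll$ is finite, $\TT_h$ decomposes as a direct sum of $\CC[\RR][\sll]$-isotypic submodules, each free over the domain $\CC[\RR]$, and this decomposition is compatible with $-\otimes_{\CC[\RR]}\CC$; hence the generator of the sign-isotypic summand extends to a homogeneous $\CC[\RR]$-basis of $\TT_h$, its image in $\overline{\TT_h}$ is nonzero, and the sign-isotypic subspace of $\overline{\TT_h}$ is one-dimensional, concentrated in degree equal to the number of edges of $G(h)$. Therefore $\zq\paren[\big]{\omega\paren[\big]{\frob_q(\sll,\TT_h,\CC[\RR])}}=q^{(\text{\# edges of $G(h)$})}=\zeta_q\paren[\big]{G(h)}$, and since the map $G(h)\mapsto\omega\paren[\big]{\frob_q(\sll,\TT_h,\CC[\RR])}$ is a morphism of graded Hopf algebras $\dyck\to\qsym$ --- being the composite of the Hopf map established earlier in this section with the Hopf endomorphism $\omega$ of $\sym$ and the Hopf inclusion $\sym\hookrightarrow\qsym$ --- \autoref{thm:universal} identifies it as the unique morphism whose associated character is $\zeta_q$, namely $\Psi_q$.

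The step I expect to be the main obstacle is the passage from $\TT_h$ to $\overline{\TT_h}$. The generator of the sign-isotypic part has every coordinate polynomial divisible by the $R_i-R_j$'s and so vanishing at the origin, so it is not evident a priori that its class in $\overline{\TT_h}$ is nonzero; the resolution is that this generator spans a free $\CC[\RR]$-module direct summand of $\TT_h$ (from the isotypic decomposition), and hence is not contained in $\gen{\RR}\TT_h$. Keeping that argument --- and the distinction between ``each coordinate polynomial is divisible by $R_i-R_j$'' and ``the element itself lies in the ideal generated by $R_i-R_j$ in $\TT_h$'' --- straight is where care is required; everything else is unwinding definitions.
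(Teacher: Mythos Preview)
Your argument is correct and follows essentially the same route as the paper: identify the sign-isotypic part of the big ring $\TT$ as the free rank-one $\CC[\RR]$-module on the alternating sum $\sum_\beta(-1)^{\text{inv}(\beta)}\one_\beta$, then impose the edge conditions of $M(h)$ to see that the multiplier $p$ must be divisible by $\prod_{\text{edges}}(R_i-R_j)$. The paper's proof stops there, leaving the ``Thus'' in the statement unargued; your additional paragraph justifying the passage from the rank-one $\CC[\RR]$-summand to the one-dimensional sign-isotypic piece of $\overline{\TT_h}=\TT_h/\gen{\RR}\TT_h$ (via the $\CC[\RR]$-linear idempotent $\tfrac{1}{n!}\sum_w(-1)^w w$, which makes the sign part a direct summand and hence not contained in $\gen{\RR}\TT_h$) is a genuine and welcome addition that the paper omits.
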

\begin{proof}
  Let $\TT_h^\pm$ be the subspace of $\TT_h$ on which $\sll$ acts according to the sign representation, let
  \begin{equation}
    x = \sum_{\beta\in\slr} \one_\beta \, g_\beta(R_1, \ldots, R_n) \in \TT_h^\pm
  \end{equation}
  be any element of $\TT_h^\pm$, and let
  \begin{equation}
    \varepsilon = \sum_{w\in\sll} (-1)^{(\text{sign of $w$})} w \in \CC[\sll]
  \end{equation}
  be the group algebra element which acts as orthogonal projection onto $\TT_h^\pm$.
  Then, for every transposition $(L_i \leftrightarrow L_j)$ we have
  \begin{equation}
    (L_i \leftrightarrow L_j) \cdot x
      = (L_i \leftrightarrow L_j) \varepsilon \cdot x
      = (-\varepsilon) \cdot x
      = -x,
  \end{equation}
  so that the polynomial coordinates of $x$ satisfy
  \begin{equation}
    g_{(L_i \leftrightarrow L_j) \circ \beta}(R_1, \ldots, R_n) = -g_\beta(R_1, \ldots, R_n).
  \end{equation}
  In other words, there is a single polynomial $g(R_1, \ldots, R_n)$ such that $x$ is of the form
  \begin{equation}
    x = \sum_{\beta\in\slr} (-1)^{(\text{\# inversions of $\beta$})} \one_\beta \, g(R_1, \ldots, R_n).
  \end{equation}
  Now, consider the edge conditions for $x \in \TT_h$.
  Each vertex of the moment graph $M(h)$ is incident to an edge labelled $(R_i - R_j)$ for each edge $\set{i, j}$ of the ordered graph $G(h)$.
  Thus, the element $x$ satisfies all edge conditions exactly when $g(R_1, \ldots, R_n)$ is divisible by
  \begin{equation}
    \prod_{\text{edges $\set{i, j}$}} (R_i - R_j)
  \qedhere\end{equation}
\end{proof}

\subsection{Changing the base ring}
Now let us show that the symmetric functions $\frob_q(\sll, \TT_h, \CC[\LL])$ and $\frob_q(\sll, \TT_h, \CC[\RR])$ are related by a reasonably simple automorphism of $\sym$.

\begin{lemma}
  We have the equations
  \begin{align}
    \frob_q(\sll, \TT_h, \CC)
      &= \frob_q(\sll, \TT_h, \CC[\LL]) \star \frob_q(\sll, \CC[\LL], \CC) \label{eq:lfac} \\
      &= \frob_q(\sll, \TT_h, \CC[\RR]) \star \frob_q(\sll, \CC[\RR], \CC), \label{eq:rfac}
  \end{align}
  where $\star$ is the Kronecker product of symmetric functions.
\end{lemma}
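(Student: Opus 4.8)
The plan is to deduce both displayed equations from a single identity of graded traces valid for each individual permutation, and then to run that identity through the formal relation between the Kronecker product and products of graded traces. I will argue for \eqref{eq:lfac}; the proof of \eqref{eq:rfac} is word-for-word the same, and in fact a touch simpler because $\sll$ acts $\CC[\RR]$-linearly on $\TT_h$ (not merely twisted-linearly) and indeed fixes $\CC[\RR]$ pointwise.

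First I would fix a homogeneous $\CC[\LL]$-basis $e_1, \ldots, e_N$ of $\TT_h$ --- for instance a choice of flow-up vectors, one for each $\beta \in \slr$, as in \autoref{sec:tymo} --- giving a decomposition $\TT_h = \bigoplus_i \CC[\LL]\, e_i$ of graded $\CC$-vector spaces. Fixing $w \in \sll$ and writing $w \cdot e_i = \sum_j a_{ji}\, e_j$ with $a_{ji} \in \CC[\LL]$, grading-preservation of the dot action forces $a_{ji}$ to be homogeneous of degree $\deg e_i - \deg e_j$; in particular the diagonal coefficients $a_{ii}$ are scalars in $\CC$. By twisted $\CC[\LL]$-linearity, $w$ carries $\CC[\LL]\, e_i$ into $\bigoplus_j \CC[\LL]\, e_j$ with diagonal block the $\CC$-linear map $p\, e_i \mapsto a_{ii}\,(w \cdot p)\, e_i$, whose graded $\CC$-trace is $q^{\deg e_i}\, a_{ii}\, \tr_q(w, \CC[\LL], \CC)$, where $\tr_q(w, \CC[\LL], \CC)$ is the graded trace of $w$ permuting the indeterminates of $\CC[\LL]$. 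Summing over $i$ and recalling $\sum_i q^{\deg e_i}\, a_{ii} = \tr_q(w, \TT_h, \CC[\LL])$ yields the per-permutation identity
\begin{equation}
  \tr_q(w, \TT_h, \CC) = \tr_q(w, \TT_h, \CC[\LL]) \cdot \tr_q(w, \CC[\LL], \CC)
\end{equation}
for every $w \in \sll$, together with its twin featuring $\CC[\RR]$.

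It then remains to assemble these into \eqref{eq:lfac} and \eqref{eq:rfac}. The extra ingredient is that $\tr_q(\cdot, \CC[\LL], \CC)$ and $\tr_q(\cdot, \CC[\RR], \CC)$, being graded traces of honest grading-preserving $\CC$-linear $\sll$-actions, are class functions of $w$. Expanding $\frob_q(\sll, \TT_h, \CC[\LL]) \star \frob_q(\sll, \CC[\LL], \CC)$ from the definitions of $\frob_q$ and of $\star$ on the power-sum basis, the $\star$-product annihilates all cross-terms except those of equal cycle type, and the class-function property lets me collapse the inner sum, turning the product into $\frac{1}{n!} \sum_{w \in \sll} \tr_q(w, \TT_h, \CC[\LL])\, \tr_q(w, \CC[\LL], \CC)\, p_{(\text{cycle type of }w)}$; by the per-permutation identity this is $\frob_q(\sll, \TT_h, \CC)$, which is \eqref{eq:lfac}, and the same computation with $\RR$ gives \eqref{eq:rfac}. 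The one step that requires genuine care is the per-permutation trace factorization: one must check that the twisted (rather than honest) $\CC[\LL]$-linearity of the dot action does not spoil the block-diagonal bookkeeping, and it does not --- precisely because grading-preservation forces the diagonal entries $a_{ii}$ into the ground field.
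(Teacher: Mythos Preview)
Your argument is correct and is essentially the paper's own proof: both establish the per-permutation factorization $\tr_q(w,\TT_h,\CC)=\tr_q(w,\TT_h,\CC[\LL])\cdot\tr_q(w,\CC[\LL],\CC)$ by choosing a homogeneous $\CC[\LL]$-basis (flow-up vectors), using that grading-preservation forces the diagonal coefficients into $\CC$, and then passing to the Kronecker identity. You are somewhat more explicit than the paper about the block-diagonal bookkeeping and about why the class-function property of $\tr_q(\cdot,\CC[\LL],\CC)$ is what collapses the double sum in $\star$, but the route is the same.
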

\begin{proof}
  Let $w \in \sll$ be a permutation.
  Let $x_i$ for $i = 1, \ldots n!$ be a homogeneous $\CC[\LL]$-linear basis for $\TT_h$ (for example, a flow-up basis) and let $y_j$ for $j = 1, 2, \ldots$ be a $\CC$-linear basis of $\CC[\LL]$ (for example, the basis of all monomials).
  Then, the elements $x_i y_j$ form a homogeneous $\CC$-linear basis for $\TT_h$.
  For each $i$, let $a_i$ be the $\CC[\LL]$-coefficient of $x_i$ in $w \cdot x_i$; by degree considerations, $a_i$ has degree zero, so in fact it lies in $\CC$.
  For each $j$, let $b_j$ be the $\CC$-coefficient of $y_j$ in $w \cdot y_j$.
  Since $w \cdot (x_i y_j) = (w \cdot x_i) (w \cdot y_j)$, it follows that the coefficient of $x_i y_j$ in $w \cdot (x_i y_j)$ is the product $a_i b_j$.
  Also, we have $q^{\deg(x_i y_j)} = q^{\deg(x_i)} q^{\deg(y_j)}$.
  This holds for all $w, i, j$, so
  \begin{equation}
    \tr_q(w, \TT_h, \CC[\LL]) = \tr_q(w, \TT_h, \CC[\LL]) \tr_q(w, \CC[\LL], \CC),
  \end{equation}
  which is enough to show \eqref{eq:lfac}. By the same argument, \eqref{eq:rfac} holds.
\end{proof}

\begin{lemma}\label{thm:change}
  Let $\id, S, E_q, E_{(1-q)} \colon \sym \to \sym$ be the Hopf endomorphisms defined by
  \begin{align}
    \id(p_{(k)}) &= p_{(k)} \\
    S(p_{(k)}) &= -p_{(k)} \\
    E_q(p_{(k)}) &= q^k p_{(k)} \\
    E_{(1-q)}(p_{(k)}) &= (1-q)^k p_{(k)},
  \end{align}
  so that we have the Hopf endomorphism
  \begin{equation}
    \paren[\big]{\id * (S \circ E_q)}(p_{(k)}) = (1-q^k) p_{(k)},
  \end{equation}
  where $*$ is the convolution product. Then, we have
  \begin{equation}\label{eq:base-change}
    E_{(1-q)}\paren[\big]{\frob_q(\sll, \TT_h, \CC[\LL])} = \paren[\big]{\id * (S \circ E_q)}\paren[\big]{\frob_q(\sll, \TT_h, \CC[\RR])}.
  \end{equation}
\end{lemma}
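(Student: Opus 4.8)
The plan is to reduce \eqref{eq:base-change} to the two Kronecker-product factorizations of $\frob_q(\sll, \TT_h, \CC)$ just proved in \eqref{eq:lfac}--\eqref{eq:rfac}, by first computing the ``base-ring'' Frobenius characteristics $\frob_q(\sll, \CC[\LL], \CC)$ and $\frob_q(\sll, \CC[\RR], \CC)$ explicitly. Both sides of \eqref{eq:base-change} are homogeneous of degree $n$ as symmetric functions with coefficients in $\CC(q)$, so it suffices to match coefficients of $p_\lambda$ for each partition $\lambda \vdash n$; and since $p_\lambda \star p_\mu = \delta_{\lambda\mu} z_\lambda p_\lambda$ in the power-sum basis, this comparison is diagonal.

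First I would handle the base cases. The group $\sll$ acts trivially on $\CC[\RR]$, so $\tr_q(w, \CC[\RR], \CC)$ equals the Hilbert series $(1-q)^{-n}$ of $\CC[\RR]$ for every $w$, and hence the coefficient of $p_\lambda$ in $\frob_q(\sll, \CC[\RR], \CC)$ is $\tfrac{1}{z_\lambda}(1-q)^{-n}$. On the other hand, $\sll$ permutes the indeterminates $\LL$, so by Molien's formula applied to the permutation module $\CC[\LL]$ one has $\tr_q(w, \CC[\LL], \CC) = \prod_c (1 - q^{\abs{c}})^{-1}$, the product running over the cycles $c$ of $w$ (a $k$-cycle contributes the factor $1-q^k$ to $\det(1-qw)$). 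Therefore the coefficient of $p_\lambda$ in $\frob_q(\sll, \CC[\LL], \CC)$ is $\tfrac{1}{z_\lambda}\prod_i (1-q^{\lambda_i})^{-1}$.

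Now write $\frob_q(\sll, \TT_h, \CC[\LL]) = \sum_{\lambda \vdash n} a_\lambda p_\lambda$ and $\frob_q(\sll, \TT_h, \CC[\RR]) = \sum_{\lambda \vdash n} b_\lambda p_\lambda$ with $a_\lambda, b_\lambda \in \CC(q)$. Extracting the coefficient of $p_\lambda$ from \eqref{eq:lfac} and \eqref{eq:rfac} and using the diagonal form of $\star$ gives $a_\lambda \prod_i (1-q^{\lambda_i})^{-1} = b_\lambda (1-q)^{-n}$, that is, $(1-q)^n a_\lambda = \paren[\big]{\prod_i (1-q^{\lambda_i})} b_\lambda$ (the base-ring $p_\lambda$-coefficients are nonzero in $\CC(q)$, so this rearrangement is legitimate). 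The left-hand side is the coefficient of $p_\lambda$ in $E_{(1-q)}\paren[\big]{\frob_q(\sll, \TT_h, \CC[\LL])}$, since $E_{(1-q)}$ scales every degree-$n$ symmetric function by $(1-q)^n$; and the right-hand side is the coefficient of $p_\lambda$ in $\paren[\big]{\id * (S \circ E_q)}\paren[\big]{\frob_q(\sll, \TT_h, \CC[\RR])}$, since $\id * (S \circ E_q)$ is a Hopf endomorphism of $\sym$ with $p_{(k)} \mapsto (1-q^k) p_{(k)}$, hence multiplicative, hence $p_\lambda \mapsto \paren[\big]{\prod_i (1-q^{\lambda_i})} p_\lambda$. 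This yields \eqref{eq:base-change}.

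I expect the only step with real content to be the Molien-series computation for the permutation action of $\sll$ on $\CC[\LL]$ --- identifying the contribution of a $k$-cycle to $\det(1-qw)$ as the factor $1-q^k$ --- together with the purely clerical task of keeping the normalizing constants $z_\lambda$ straight when translating between the two descriptions of $\frob_q(\sll, \TT_h, \CC)$; everything else is routine bookkeeping with power sums.
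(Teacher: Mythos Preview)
Your proposal is correct and is essentially the paper's own argument: compute the two base-ring Frobenius characteristics (the paper counts fixed monomials directly, you cite Molien, but the content is identical), then use the diagonal form of the Kronecker product on power sums together with \eqref{eq:lfac}--\eqref{eq:rfac} to obtain the scalar relation $(1-q)^n a_\lambda = \prod_i (1-q^{\lambda_i})\, b_\lambda$, which is exactly \eqref{eq:base-change} read off in the $p$-basis. The only cosmetic difference is that the paper phrases the last step as ``Kronecker multiplication by $\frob_q(\sll,\CC[\LL],\CC)$ is the inverse of $\id * (S\circ E_q)$'' rather than extracting coefficients explicitly.
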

\begin{proof}
  The action of $\sll$ on $\CC[\LL]$ simply permutes the indeterminates $\LL = (L_1, \ldots, L_n)$, so it is easy to compute the graded trace of an element $w \in \sll$ acting on all monomials in the indeterminates $\LL$;
  a monomial is fixed by $w$ iff for every cycle of $w$, the indeterminates in the cycle have the same exponent in the monomial. Thus, the contribution of $w$ to $\frob_q(\sll, \CC[\LL], \CC)$ is
  \begin{equation}
    \frac{1}{n!} \tr_q(w, \CC[\LL], \CC) = \frac{1}{n!} \cdot \frac{p_{(\lambda_1)}}{1 - q^{\lambda_1}} \cdot \frac{p_{(\lambda_2)}}{1 - q^{\lambda_2}} \cdot \cdots \cdot \frac{p_{(\lambda_\ell)}}{1 - q^{\lambda_\ell}},
  \end{equation}
  where $\lambda = (\lambda_1 \geq \lambda_2 \geq \cdots \geq \lambda_\ell)$ is the cycle type of $w$.
  This means that the effect of the Kronecker multiplication by $\frob_q(\sll, \CC[\LL], \CC)$ in \eqref{eq:lfac} is the inverse of the effect of the morphism $(\id * (S \circ E_q))$ from the statement.

  The action of $\sll$ on $\CC[\RR]$ is even simpler; it is the trivial action. Thus, the contribution of $w \in \sll$ to $\frob_q(\sll, \CC[\RR], \CC)$ is
  \begin{equation}
    \frac{1}{n!} \tr_q(w, \CC[\RR], \CC) = \frac{1}{n!} \cdot \frac{p_{(\lambda_1)}}{(1 - q)^{\lambda_1}} \cdot \frac{p_{(\lambda_2)}}{(1 - q)^{\lambda_2}} \cdot \cdots \cdot \frac{p_{(\lambda_\ell)}}{(1 - q)^{\lambda_\ell}},
  \end{equation}
  where again $\lambda$ is the cycle type of $w$.
  The effect of Kronecker multiplication by $\frob_q(\sll, \CC[\RR], \CC)$ in \eqref{eq:rfac} is then the inverse of the effect of the morphism $E_{(1-q)}$ from the statement of the lemma.
  Equation \eqref{eq:base-change} follows.
\end{proof}

\section{A technical lemma proved by sign-reversing involution}\label{sec:inv}
At this point, we almost have our proof of the Shareshian--Wachs conjecture.
We have identified two maps $\Psi_0, \Psi_q \colon \dyck \to \sym$ of graded Hopf algebras, and proved that for every Hessenberg functions $h$,
\begin{align}
  \Psi_0\paren[\big]{G(h)} &= \csf_q\paren[\big]{G(h)} \\
  \Psi_q\paren[\big]{G(h)} &= \omega\paren[\big]{\frob_q(\sll, \TT_h, \CC[\RR])}
\end{align}
in \autoref{sec:qcsf2} and \autoref{sec:mor}, respectively.
We also have the equation
\begin{multline}
E_{(1-q)}\paren[\big]{\omega\paren[\big]{\frob_q(\sll, \TT_h, \CC[\LL])}} \\ = \paren[\big]{\id * (S \circ E_q)}\paren[\big]{\omega\paren[\big]{\frob_q(\sll, \TT_h, \CC[\RR])}}
\end{multline}
from \autoref{thm:change}, after applying the involution $\omega$.
All that remains is to show that $\Psi_0$ and $\Psi_q$ satisfy the same relation,
\begin{equation}
E_{(1-q)}\paren[\big]{\Psi_0\paren[\big]{G(h)}} = \paren[\big]{\id * (S \circ E_q)}\paren[\big]{\Psi_q\paren[\big]{G(h)}},
\end{equation}
by checking that they have the same multiplicative character, that is,
\begin{equation}
\zq\paren[\big]{E_{(1-q)}\paren[\big]{\Psi_0\paren[\big]{G(h)}}} = \zq\paren[\big]{\paren[\big]{\id * (S \circ E_q)}\paren[\big]{\Psi_q\paren[\big]{G(h)}}}.
\end{equation}
Also, note that it is enough to verify this for all ordered graphs of the form $G(h)$ which are nonempty and connected, since these are the multiplicatively irreducible elements of $\dyck$.
Once this is done, it will follow that
\begin{equation}
  \Psi_0\paren[\big]{G(h)} = \omega\paren[\big]{\frob_q(\sll, \TT_h, \CC[\LL])},
\end{equation}
as required.
Note that the proof below relies on knowing that $\Psi_q\paren[\big]{G(h)}$ lies in $\sym \subset \qsym$ for all Hessenberg functions $h$, which we have only established by going through Hessenberg varieties.
It would be nice to also have more direct combinatorial proof of this fact, like we have for $\Psi_0\paren[\big]{G(h)}$.

\subsection{A direct computation}
We can compute $\zq\paren[\big]{E_{(1-q)}\paren[\big]{\Psi_0\paren[\big]{G(h)}}}$ directly.
The action of the map $E_{(1-q)} \colon \sym \to \sym$ is simply to multiply the $k$th homogeneous graded piece of $\sym$ by $(1-q)^k$ for each $k$.
Let $E'_{(1-q)} \colon \dyck \to \dyck$ be map which multiplies the $k$th homogeneous graded piece of $\dyck$ by $(1-q)^k$ for each $k$.
Since $\Psi_0$ respects the grading, we have
\begin{equation}
\zq\paren[\big]{E_{(1-q)}\paren[\big]{\Psi_0\paren[\big]{G(h)}}}
  = \zq\paren[\big]{\Psi_0\paren[\big]{E'_{(1-q)}\paren[\big]{G(h)}}}
  = \zeta_0\paren[\big]{E'_{(1-q)}\paren[\big]{G(h)}}.
\end{equation}
However, $\zeta_0$ is almost always zero.
In fact, the only nonempty connected ordered graphs of the form $G(h)$ which is nonzero under $\zeta_0$ is the ordered graph $G_1$ with a single vertex. This ordered graph has degree 1, so we have
\begin{equation}
  \zq\paren[\big]{E_{(1-q)}\paren[\big]{\Psi_0(G_1)}} = 1-q.
\end{equation}

\subsection{A combinatorial interpretation}
The situation is more complicated with the map $\paren[\big]{\id * (S \circ E_q)} \colon \sym \to \sym$.
Let $\pi_0$ and $\pi_+$ be the projection onto the homogeneous part of degree zero and the parts of positive degree of $\sym$, respectively.
Then, Takeuchi's formula for the antipode tells us that
\begin{equation}
  S = \sum_{r \geq 0} (-1)^r \underbrace{\pi_+ * \cdots * \pi_+}_{\text{$r$ copies}}.
\end{equation}
Thus, we have
\begin{equation}
  \id * (S \circ E_q) = \sum_{r \geq 0} (-1)^r \id * \underbrace{(\pi_+ \circ E_q) * \cdots * (\pi_+ \circ E_q)}_{\text{$r$ copies}}.
\end{equation}
Also, the canonical character $\zq$ is multiplicative, so
\begin{multline}\label{eq:big-char}
  \zq \circ \paren[\big]{\id * (S \circ E_q)} \\ = \sum_{r \geq 0} (-1)^r \paren[\big]{\zq \otimes \underbrace{(\zq \circ \pi_+ \circ E_q) \otimes \cdots \otimes (\zq \circ \pi_+ \circ E_q)}_{\text{$r$ copies}}} \circ \Delta_{r+1}.
\end{multline}
Recall from \autoref{sec:qcsf2} that the coefficient of the quasisymmetric function $M_\alpha$ in $\Psi_q\paren[\big]{G(h)}$ is
\begin{equation}\label{eq:psiq-coeff}
  \sum_{\substack{\kappa \colon V \to [r] \\ \text{of type $\alpha$}}}
    q^{(\text{\# weak ascents of $\kappa$ on $G$})}.
\end{equation}
Let us compute the value of \eqref{eq:big-char} on $M_\alpha$.
By definition,
\begin{equation}
  \Delta_{r+1}(M_\alpha) = \sum_{\alpha^0, \alpha^1, \ldots, \alpha^r}
    M_{\alpha^0} \otimes M_{\alpha^1} \otimes \cdots \otimes M_{\alpha^r},
\end{equation}
where the sum is over all $(r+1)$-tuples $(\alpha^0, \alpha^1, \ldots, \alpha^r)$ of sequences of natural numbers whose concatenation is the original sequence $\alpha$.
Note that $\zq(M_{\alpha^i}) = 0$ if $\alpha^i$ has more than one part, and $\pi_+(M_{\alpha^i}) = 0$ if $\alpha^i$ is empty.
Thus, the only terms which survive when applying \eqref{eq:big-char} to $M_\alpha$ are the ones where either:
\begin{itemize}[nosep,label=--]
  \item $\alpha$ has $r+1$ parts, and $\alpha^i$ consists of the $(i+1)$st part of $\alpha$; or
  \item $\alpha$ has $r$ parts, $\alpha^0$ is empty, and $\alpha^i$ consists of the $i$th part of $\alpha$.
\end{itemize}
All in all, applying \eqref{eq:big-char} to $M_\alpha$ when $\alpha = (\alpha_1, \ldots, \alpha_r)$ gives two terms:
\begin{equation}
  (-1)^{r-1} q^{\alpha_2 + \cdots + \alpha_r} + (-1)^r q^{\alpha_1 + \alpha_2 + \cdots + \alpha_r}.
\end{equation}
Combining this with \eqref{eq:psiq-coeff}, we get the combinatorial interpretation
\begin{equation}\label{eq:stat}
  \zq\paren[\big]{\paren[\big]{\id * (S \circ E_q)}\paren[\big]{\Psi_q\paren[\big]{G(h)}}}
    = \sum_{r \geq 0} \sum_\kappa (-1)^r q^{\stat(\kappa)},
\end{equation}
where the inner sum is over all colourings $\kappa \colon [n] \to \set{0} \cup [r]$ such that every colour in $[r]$ is used at least once (but the special colour 0 is optional), and the statistic in the exponent of $q$ is
\begin{equation}
  \stat(\kappa)
    = (\text{\# vertices with colour $>0$})
    + (\text{\# weak ascents of $\kappa$ on $G(h)$}).
\end{equation}

\subsection{The sign-reversing involution}
We want to show that most of the time, the sum on the right-hand side of \eqref{eq:stat} is zero.
More precisely, it should be zero whenever $G(h)$ is a connected ordered graph with more than one vertex, and it should be $1-q$ when $G(h) = G_1$ is the ordered graph with a single vertex.
(We leave the very finite case of $G_1$ as an exercise to the reader.)
We will show this by exhibiting a sign-reversing involution on the terms $(-1)^r q^{\stat(\kappa)}$ which preserves the statistic in the exponent of $q$, so that all terms cancel out.
Sadly, however, this involution is defined by a lengthy case analysis.
On the plus side, it will not depend on the ordered graph $G(h)$, other than the assumptions that
\begin{itemize}[nosep,label=--]
  \item it has at least two vertices, and
  \item there is an edge joining the vertices last two vertices, $n-1$ and $n$,
\end{itemize}
which are satisfied for all connected ordered graphs with more than one vertex.
Let $\kappa \colon [n] \to \set{0} \cup [r]$ be a colouring such that every colour in $[r]$ is used at least once, but the special colour 0 may be used or not. We will define a colouring $\kappa' \colon [n] \to \set{0} \cup [r\pm1]$ which uses either one more or one less non-special colour, so that the sign is reversed, and for which $\stat(\kappa) = \stat(\kappa')$. The involution is defined in all cases so that the only edge which could change status between being a weak ascent or a strict descent is the edge joining $n-1$ and $n$, and this is compensated by whether the vertices $n-1$ and $n$ have a positive colour.

\subsubsection{If vertex $n$ has colour 0}
Let vertex $n-1$ have colour $i$.
If this is the only vertex with colour $i$, then to obtain $\kappa'$ recolour $n-1$ with colour $i-1$ and delete colour $i$ from the list of available colours.
If $n-1$ is not the only vertex with colour $i$, then to obtain $\kappa'$ recolour $n-1$ with a new colour $i+\tfrac12$.
In both cases, rename the colours in an order-preserving way so that the set of non-special colours is actually $[r\pm1]$.
Note that these two procedures are inverses of each other.

\subsubsection{If vertex $n$ is the only vertex with colour 1}
We play essentially the same game as in the previous case, except that now we skip over the colour 1 when recolouring $n-1$.
That is, let vertex $n-1$ have colour $i$.
If this is the only vertex with colour $i$, then recolour $n-1$ with the previous colour in the sequence
\begin{equation}\label{eq:seqnot1}
  0, 2, 3, 4, 5, \ldots
\end{equation}
and delete colour $i$ from the list of available colours.
If $n-1$ is not the only vertex with colour $i$, then recolour $n-1$ with a new colour, which should be inserted just before the colour that comes after $i$ in the sequence \eqref{eq:seqnot1}.
As before, after doing this, rename the colours in an order-preserving way so that the set of non-special colours used is $[r\pm1]$.
Again, the two procedures in this case are inverses of each other.

\subsubsection{Otherwise}
In all other cases, we recolour the vertex $n$ rather than $n-1$.
If $n$ is the unique vertex with colour $i$, then recolour it $i-i$, and delete colour $i$.
If $n$ is not the unique vertex with colour $i$, then recolour it $i+\tfrac12$, a new colour.
As always, rename the set of non-special colours used to be $[r\pm1]$ and note that these two procedures are inverses.

\hbadness=9999 
\begin{bibdiv}
\begin{biblist}



\bib{abe-crooks15}{article}{
  title={Minimal nilpotent Hessenberg varieties},
  author={Abe, Hiraku},
  author={Crooks, Peter},
  date={2015},
  eprint={\arxiv{1510.02436}},
}

\bib{abe-harada-horiguchi-masuda15}{article}{
  title={The cohomology rings of regular nilpotent Hessenberg varieties in Lie type A},
  author={Abe, Hiraku},
  author={Harada, Megumi},
  author={Horiguchi, Tatsuya},
  author={Masuda, Mikiya},
  date={2015},
  eprint={\arxiv{1512.09072}},
}

\bib{aguiar-bergeron-sottile06}{article}{
  title={Combinatorial Hopf algebras and generalized Dehn--Sommerville relations},
  author={Aguiar, Marcelo},
  author={Bergeron, Nantel},
  author={Sottile, Frank},
  date={2006},
  eprint={\doi{10.1112/S0010437X0500165X}},
  journal={Compositio Mathematica},
  issn={1570-5846},
  volume={142},
  number={01},
  pages={1--30},
}

\bib{aguiar-mahajan12}{article}{
  title={Hopf monoids in the category of species},
  author={Aguiar, Marcelo},
  author={Mahajan, Swapneel},
  date={2012},
  eprint={\arxiv{1210.3120}},
}

\bib{athanasiadis15}{article}{
  title={Power sum expansion of chromatic quasisymmetric functions},
  author={Athanasiadis, Christos A.},
  date={2015},
  eprint={\arxiv{1409.2595}},
  journal={Electronic Journal of Combinatorics},
  volume={22},
  number={2},
  note={Research paper 2.7},
}

\bib{brosnan-chow15}{article}{
  title={Unit interval orders and the dot action on the cohomology of regular semisimple Hessenberg varieties},
  author={Brosnan, Patrick},
  author={Chow, Timothy Y.},
  date={2015},
  eprint={\arxiv{1511.00773}},
}

\bib{goresky-kottwitz-macpherson98}{article}{
  title={Equivariant cohomology, Koszul duality, and the localization theorem},
  journal={Invent.\ Math.},
  author={Goresky, Mark},
  author={Kottwitz, Robert},
  author={MacPherson, Robert},
  volume={131},
  date={1998},
  pages={25--83},
  eprint={\doi{10.1007/s002220050197}},
}

\bib{macdonald95}{book}{
  title={Symmetric functions and Hall polynomials},
  edition={second edition},
  author={Macdonald, I. G.},
  date={1995},
  publisher={The Clarendon Press Oxford University Press},
  series={Oxford Mathematical Monographs},
  address={New York},
  note={With contributions by A. Zelevinsky, Oxford Science Publications},
  pages={x+475},
  isbn={0-19-853489-2},
}

\bib{shareshian-wachs12}{article}{
  title={Chromatic quasisymmetric functions and Hessenberg varieties},
  author={Shareshian, John},
  author={Wachs, Michelle L.},
  date={2012},
  eprint={\arxiv{1106.4287}},
  book={
    title={in: ``Configuration Spaces", proceedings},
    editor={Bjorner, A.},
    editor={Cohen, F.},
    editor={De Concini, C.},
    editor={Procesi, C.},
    editor={Salvetti, M.},
    publisher={Edizioni della Normale},
    address={Pisa},
  },
  pages={433--460},
}

\bib{shareshian-wachs14}{article}{
  title={Chromatic quasisymmetric functions},
  author={Shareshian, John},
  author={Wachs, Michelle L.},
  date={2014},
  eprint={\arxiv{1405.4629}},
}

\bib{stanley95}{article}{
  title={A symmetric function generalization of the chromatic polynomial of a graph},
  author={Stanley, Richard P.},
  journal={Advances in Mathematics},
  volume={111},
  date={1995},
  pages={166--194},
}

\bib{teff13}{thesis}{
  title={The Hessenberg representation},
  author={Teff, Nicholas James},
  type={PhD thesis},
  organization={University of Iowa},
  date={2013},
  eprint={\href{http://ir.uiowa.edu/etd/4919}{Iowa Research Online}},
}

\bib{tymoczko05}{article}{
  title={An introduction to equivariant cohomology and homology, following Goresky, Kottwitz, and MacPherson},
  author={Tymoczko, Julianna S.},
  date={2005},
  eprint={\arxiv{math/0503369}},
}


\bib{tymoczko07b}{article}{
  title={Permutation actions on equivariant cohomology},
  author={Tymoczko, Julianna S.},
  date={2007},
  eprint={\arxiv{0706.0460}},
}

\end{biblist}
\end{bibdiv}

\setlength{\hfuzz}{6pt} 
\end{document}